\title{Andrews-Gordon type series for the level 5 and 7 standard modules of 
the affine Lie algebra $A^{(2)}_2$}
\author{Motoki Takigiku}
\address{Graduate School of Natural Science and Technology, Okayama University, Okayama 700-8530, Japan}
\email{takigiku@math.okayama-u.ac.jp}
\author{Shunsuke Tsuchioka}
\address{Department of Mathematical and Computing Sciences, Tokyo Institute of Technology, Tokyo 152-8551, Japan}
\email{tshun@kurims.kyoto-u.ac.jp}
\date{Jun 4, 2020}
\keywords{integer partitions,
Rogers-Ramanujan identities,
affine Lie algebras,
vertex operators,
Andrews-Gordon identities, $q$-series, hypergeometric series}
\subjclass[2010]{Primary~11P84, Secondary~05E10}
\def\node#1#2{\overset{#1}{\underset{#2}{\circ}}}
\def\ver#1#2{\overset{{\llap{$\scriptstyle#1$}\displaystyle\circ{\rlap{$\scriptstyle#2$}}}}{\scriptstyle\vert}}
\tikzstyle{every picture}+=[remember picture]
\tikzstyle{na} = [baseline=-.5ex]
\tikzstyle{mine}= [arrows={angle 90}-{angle 90},thick]
\def\Llleftarrow{%
\lower2pt\hbox{\begingroup
\tikz
\draw[shorten >=0pt,shorten <=0pt] (0,3pt) -- ++(-1em,0) (0,1pt) -- ++(-1em-1pt,0) (0,-1pt) -- ++(-1em-1pt,0) (0,-3pt) -- ++(-1em,0) (-1em+1pt,5pt) to[out=-105,in=45] (-1em-2pt,0) to[out=-45,in=105] (-1em+1pt,-5pt);
\endgroup}
}
\def\Rrrightarrow{\vcenter{\hbox{\rotatebox{180}{\Llleftarrow}}}}
\newtheorem{Thm}{Theorem}[section]
\newtheorem{Conj}[Thm]{Conjecture}
\newtheorem{Lem}[Thm]{Lemma}
\newtheorem{Rem}[Thm]{Remark}
\newcommand{\Z}{\mathbb{Z}}
\newcommand{\princhar}[2]{\chi_{#1}(#2)}
\newcommand{\sums}[1]{\sum_{\substack{#1}}}
\newcommand{\qp}[2][\infty]{(q^{#2};q^{#2})_{#1}}
\newcommand{\qpp}[3][\infty]{(q^{#2};q^{#3})_{#1}}
\newcommand{\eqEuler}{A}
\newcommand{\eqEulerr}{B}
\newcommand{\eqbinom}{C}
\newcommand{\gnahmsp}[3]{\sum_{#3} \frac{#2}{#1}}
\newcommand{\gnahms}[4]{\sum_{#4} (-1)^{#2} \frac{#3}{#1}}
\newcommand{\GEE}{\mathfrak{g}}
\newcommand{\quadd}[2]{{#1\binom{i}{2} + #2\binom{j}{2}}}
\newcommand{\intee}[1]{{#1 ij}}
\newcommand{\linee}[2]{{#1 i + #2 j}}
\newcommand{\denommm}[3]{\qp[i]{#1} \qp[j]{#2} \qp[k]{#3}}
\newcommand{\quaddd}[3]{{
    #1\binom{i}{2} + #2\binom{j}{2}  + #3\binom{k}{2} 
}}
\newcommand{\inteee}[3]{{
    #1 ij + #2 ik + #3 jk
}}
\newcommand{\lineee}[3]{{
    #1 i + #2 j + #3 k
}}
\newcommand{\denommmm}[4]{\qp[i]{#1} \qp[j]{#2} \qp[k]{#3} \qp[\ell]{#4}}
\newcommand{\quadddd}[4]{{
    #1\binom{i}{2} + #2\binom{j}{2} + #3\binom{k}{2} + #4\binom{\ell}{2}
}}
\newcommand{\inteeee}[6]{{
    #1 ij + #2 ik + #3 i\ell +
    #4 jk + #5 j\ell + 
    #6 k\ell
}}
\newcommand{\lineeee}[4]{{
    #1 i + #2 j + #3 k + #4 \ell
}}
\newcommand{\numerrr}[9]{{
    #1\binom{i}{2} + #2\binom{j}{2}  + #3\binom{k}{2} 
    + #4 ij + #5 ik + #6 jk
    + #7 i + #8 j + #9 k
}}
\newcommand{\qnumerrr}[9]{q^{\numerrr{#1}{#2}{#3}{#4}{#5}{#6}{#7}{#8}{#9}}}
\newcommand{\numANiJusanIchi}{1}
\newcommand{\numANiJusanNi}{2}
\newcommand{\numANiJusanSan}{3}
\begin{document}
\maketitle
\maketitle

\begin{abstract}
We give Andrews-Gordon type series for
the principal characters of 
the level 5 and 7 standard modules
of the affine Lie algebra $A^{(2)}_{2}$.
We also give conjectural series for some level 2 modules of $A^{(2)}_{13}$.
\end{abstract}

\section{Introduction}
In this paper, we use the $q$-Pochhammer symbol: for $n\in\mathbb{N},m\in\mathbb{N}\cup\{\infty\}$,
\begin{align*}
(x;q)_\infty := \prod_{i\ge 0} (1-xq^i),\quad
(x;q)_n := \prod_{i=0}^{n-1} (1-xq^i),\quad
(a_1,\dots,a_k;q)_{m}:= (a_1;q)_m \cdots (a_k;q)_m.
\end{align*}

\subsection{The Andrew-Gordon identities}\label{RRintro}
The \emph{Rogers-Ramanujan identities}
\begin{align}
\sum_{n\geq 0}\frac{q^{n^2}}{(q;q)_n} = \frac{1}{(q,q^4;q^{5})_{\infty}}, \quad
\sum_{n\geq 0}\frac{q^{n^2+n}}{(q;q)_n} = \frac{1}{(q^2,q^3;q^{5})_{\infty}}
\label{RRidentities}
\end{align}
was one of the motivations for inventing the vertex operators~\cite[\S14]{Kac} in the theory of affine Lie algebras 
(see ~\cite{Lep}). It started from Lepowsky-Milne's observation~\cite{LM}:
\begin{align*}
\chi_{A^{(1)}_1}(2\Lambda_0+\Lambda_1) = \frac{1}{(q,q^4;q^{5})_{\infty}}, \quad
\chi_{A^{(1)}_1}(3\Lambda_0) = \frac{1}{(q^2,q^3;q^{5})_{\infty}}.
\end{align*}
Here, $\chi_A(\lambda)$ (called the \emph{principal chacater}) stands 
for the principally specialized character of the vacuum space $\Omega(V(\lambda))$ ~\cite[\S7]{Fil}
for the integrable highest weight module (a.k.a. the standard module) $V(\lambda)$
associated with a dominant integral weight $\lambda\in P^+$
of the affine Lie algebra $\GEE(A)$.
We obey the numbering of vertices of the affine Dynkin diagram $A$ in ~\cite[\S4]{Kac} and
duplicate $A^{(1)}_1,A^{(2)}_2,A^{(2)}_{\textrm{odd}}$ as Figure \ref{twisted}.
The \emph{level} of $\sum_{i\in I}d_i\Lambda_i\in P^+$ is given by $\sum_{i\in I}\check{a}_id_i$,
where the colabel $\check{a}_i$ is the number written on the vertex $\alpha_i$
in the figure.
We can expand $\chi_A(\lambda)$ into an explicit infinite product via Lepowsky's numerator formula (see ~\cite{Bos}).

After the success of vertex operator theoretic proofs of
the Rogers-Ramanujan identities~\cite{LW1,LW2,LW3}, it has been expected that, for each $A$ and $\lambda$, 
there should exist ``Rogers-Ramanujan type identities'' 
whose infinite products are given by $\chi_A(\lambda)$. 

The \emph{Andrews-Gordon identities} (Theorem \ref{AGTHM})
can be seen as an instance of this expectation
because of an existence of a vertex operator theoretic proof for it~\cite{LP}. 
Note that the infinite product (the right hand side) in Theorem \ref{AGTHM} is equal to $\chi_{A^{(1)}_1}((2k-i)\Lambda_0+(i-1)\Lambda_1)$.
This is the case of level $2k-1$ and the Rogers-Ramanujan identities \eqref{RRidentities} are the cases when $k=2$ and $i=2,1$. An even level analog is known as the Andrews-Bressoud identities (see ~\cite[\S3.2.2]{Sil}).

\begin{Thm}[{\cite{An3}}]\label{AGTHM}
Let $1\leq i\leq k$. Putting $N_j=n_j+\cdots+n_{k-1}$, 
we have
\begin{align*}
\sum_{n_1,\cdots,n_{k-1}\geq 0} \frac{q^{N_1^2+\cdots+N_{k-1}^2+N_i+\cdots+N_{k-1}}}{(q;q)_{n_1}\cdots (q;q)_{n_{k-1}}}
= \frac{(q^i,q^{2k+1-i},q^{2k+1};q^{2k+1})_{\infty}}{(q;q)_{\infty}}.
\end{align*}
\end{Thm}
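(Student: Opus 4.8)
The plan is to prove the identity by the method of functional equations on a two-variable generating function, following Andrews' original Bailey-chain-free approach. First I would introduce the polynomials (Andrews–Gordon polynomials)
\[
D_{k,i}(x;q) := \sum_{n_1,\dots,n_{k-1}\ge 0} \frac{x^{N_1+\cdots+N_{k-1}}\, q^{N_1^2+\cdots+N_{k-1}^2 + N_i + \cdots + N_{k-1}}}{(q;q)_{n_1}\cdots (q;q)_{n_{k-1}}},
\]
where as in the statement $N_j = n_j + \cdots + n_{k-1}$, so that the left-hand side of the theorem is $D_{k,i}(1;q)$. The first key step is to derive, directly from the series, the recurrences
\[
D_{k,i}(x;q) - D_{k,i-1}(x;q) = (xq)^{\,i-1}\, D_{k,k-i+1}(xq;q) \qquad (2 \le i \le k),
\]
together with the boundary relation $D_{k,1}(x;q) = D_{k,k}(xq;q)$. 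These come from splitting the sum according to whether $n_1 = 0$ or $n_1 \ge 1$ and reindexing; the shift $x \mapsto xq$ absorbs the linear term produced by completing the square in $N_1$. A short induction then shows that the pair $(D_{k,i})_{1\le i\le k}$ is uniquely determined among power series in $x$ by these functional equations plus the normalization $D_{k,i}(0;q)=1$.

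The second key step is to exhibit a closed-form solution of the same functional system. The natural candidate is a Rogers–Ramanujan–Watson type $q$-series: set
\[
R_{k,i}(x;q) := \frac{1}{(xq;q)_\infty}\sum_{n\ge 0} (-1)^n\, x^{kn}\, q^{\,(2k+1)\binom{n}{2} + n\left((2k+1)(?)/\cdots\right)}\bigl(1 - x^{i} q^{(2i-1)n+\cdots}\bigr),
\]
i.e. the function obtained from Watson's analytic proof of the Andrews–Gordon identities. Concretely I would take $R_{k,i}(x;q)$ to be the quotient of a theta-like sum by $(xq;q)_\infty$, verify that it satisfies $R_{k,i} - R_{k,i-1} = (xq)^{i-1}R_{k,k-i+1}(xq;q)$ by a straightforward manipulation of the summation index (using the Jacobi triple product to recognize the telescoping), and check $R_{k,i}(0;q)=1$. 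By the uniqueness from Step 1, $D_{k,i}(x;q) = R_{k,i}(x;q)$ as formal power series in $x$.

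The final step is specialization: put $x = 1$. On the series side we get exactly the left-hand side of the theorem. On the $R_{k,i}$ side, $(q;q)_\infty R_{k,i}(1;q)$ becomes a bilateral theta series which the Jacobi triple product collapses to $(q^i, q^{2k+1-i}, q^{2k+1}; q^{2k+1})_\infty$, yielding the claimed infinite product. I expect the main obstacle to be Step 2 — pinning down the correct quadratic exponents in $R_{k,i}$ and checking it satisfies the functional equations — since that is where the $2k+1$ modulus and the two residue classes $i$ and $2k+1-i$ must emerge from a single telescoping identity; the convergence/uniqueness bookkeeping in Step 1 and the triple-product evaluation in Step 3 are routine by comparison. (An alternative to Step 2 is to invoke Bailey's lemma and iterate the Bailey pair relative to $1/q$ on the unit Bailey pair $k-1$ times, which reproduces the same $R_{k,i}$; I would mention this as a remark rather than as the primary argument.)
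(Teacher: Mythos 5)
The paper itself gives no proof of Theorem \ref{AGTHM}: it is quoted from Andrews \cite{An3}, and your outline is essentially that classical route (Rogers--Selberg/Watson-type functional equations, a uniqueness argument, and the Jacobi triple product at $x=1$), so the comparison is with Andrews' original argument rather than with anything in this paper. The strategy is the right one, but as written your proposal has two concrete gaps. In Step 1 the recurrences you state, $D_{k,i}(x)-D_{k,i-1}(x)=(xq)^{i-1}D_{k,k-i+1}(xq)$ for $2\le i\le k$ and $D_{k,1}(x)=D_{k,k}(xq)$, are correct (the second is immediate, and the uniqueness induction does work), but your proposed derivation ``split the sum according to whether $n_1=0$ or $n_1\ge1$'' does not yield them: the difference of the two numerators carries the factor $1-q^{N_{i-1}}$, which involves $n_{i-1},\dots,n_{k-1}$ rather than $n_1$, and since $N_{i-1}\neq n_{i-1}$ in general this factor cannot simply be absorbed into $(q;q)_{n_{i-1}}$ after a reindexing. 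One either needs a genuine telescoping on $N_{i-1}=n_{i-1}+N_i$, or, as Andrews does, one reverses the logic: prove the functional equations for the closed-form single-sum functions and then obtain the multisum by iterating them, so that the multisum is never differenced directly.

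The more serious gap is Step 2, which you yourself flag as the crux and leave unresolved: the exponent contains a literal question mark, and the shape you propose (a pure theta-type sum divided by $(xq;q)_\infty$) does not satisfy the functional equations for general $x$. The function that does is Andrews' $J_{k,i}(0;x;q)$, namely
\[
\frac{1}{(xq;q)_\infty}\sum_{n\ge0}(-1)^n x^{kn} q^{(2k+1)\binom{n+1}{2}-in}\bigl(1-x^{i}q^{(2n+1)i}\bigr)\frac{(xq;q)_n}{(q;q)_n},
\]
and the extra factor $(xq;q)_n/(q;q)_n$ is essential; it only trivializes at $x=1$. Without writing this function down and actually verifying the recurrences for it, there is no proof. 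Step 3 is fine once Step 2 is in place: at $x=1$ the two halves of each summand combine into the bilateral series $\sum_{n\in\Z}(-1)^n q^{(2k+1)\binom{n}{2}+(2k+1-i)n}$, and the triple product evaluates it to $(q^i,q^{2k+1-i},q^{2k+1};q^{2k+1})_\infty$, giving the stated right-hand side after dividing by $(q;q)_\infty$.
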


As do the Rogers-Ramanujan identities, 
Theorem \ref{AGTHM} has
an intepretation as a partition theorem~\cite{Gor}.
A \emph{partition theorem} is a statement of the form ``For any $n\geq 0$, partitions $(\lambda_1,\cdots,\lambda_{\ell})$ of $n$ with
condtion $C$ are equinumerous to partitions of $n$ with condition $D$''.
Theorem \ref{AGTHM} is equivalent to the partition theorem, where
\begin{enumerate}
\item[$C$:] $1\leq\forall j\leq \ell-k+1,\lambda_j-\lambda_{j+k-1}\geq 2$ and 
$|\{1\leq j\leq\ell\mid \lambda_j=1\}|<i$,
\item[$D$:] $1\leq\forall j\leq \ell,\lambda_j\not\equiv 0,\pm i\pmod{2k+1}$.
\end{enumerate}

\begin{figure}
\begin{align*}
\begin{array}{r@{\quad}l@{\qquad}l@{\quad}l@{\qquad}l@{\quad}l}
A_1^{(1)} & \node{1}{\alpha_0} \Leftrightarrow \node{1}{\alpha_1} & 
A_2^{(2)} &\node{2}{\alpha_0} {\quad\!\!\!\!\!\!\!\!}\Rrrightarrow{\quad\!\!\!\!\!} \node{1}{\alpha_1} &
A_{2\ell-1}^{(2)}  &\node{1}{\alpha_1}-\node{\ver{1}{\alpha_0}}{\alpha_2}\!\!{}^2-\node{2}{\alpha_3}-\cdots-\node{2}{\alpha_{\ell-1}}\Leftarrow\node{2}{\alpha_\ell} 
\end{array}
\end{align*}
\caption{The affine Dynkin diagrams $A^{(1)}_1,A^{(2)}_{2},A^{(2)}_{\textrm{odd}}$.}
\label{twisted}
\end{figure}

\subsection{The main theorems}\label{mainres}
In this paper (see also ~\cite[Conjecture 1.1]{CL}), 
``Andrews-Gordon type series'' stands for an infinite sum of the form 
\begin{align*}
\sum_{i_1,\cdots,i_s\geq 0}
\frac{(-1)^{\sum_{\ell=1}^{s}L_\ell i_\ell}q^{\sum_{\ell=1}^{s}a_{\ell\ell}{\ell\choose 2}+\sum_{1\leq j<k\leq s}a_{jk}i_ji_k+\sum_{\ell=1}^{s}B_\ell i_\ell}}
{(q^{C_1};q^{C_1})_{i_1}\cdots(q^{C_s};q^{C_s})_{i_s}},
\end{align*}
for some $(L_\ell)_{\ell=1}^{s},(B_\ell)_{\ell=1}^{s}\in\mathbb{Z}^s,(C_\ell)_{\ell=1}^{s}\in\mathbb{N}_{\geq 1}^s$ and $(a_{jk})_{1\leq j\leq k\leq s}\in\Z^{{s+1 \choose 2}}$.
Usually, we would like to impose some non-degeneracy conditions such as $a_{jk}>0$ for all $1\leq j\leq k\leq s$
in order not to make the problem trivial.
In the rest, we put 
\begin{align*}
[x;q]_m=(x,q/x;q)_m,\quad
[a_1,\cdots, a_k;q]_m=[a_1;q]_m\cdots [a_k;q]_m
\end{align*}
for $m\in\mathbb{N}\cup\{\infty\}$. 
The purpose of this paper is to give the following.

\begin{Thm}\label{theo:A22:l5}
Concerning level 5 standard modules for $A^{(2)}_2$, we have
\begin{align*}
\sum_{i,j,k\ge0} (-1)^k \frac{\qnumerrr{}{8}{2}{2}{2}{4}{}{5}{}}{\denommm{}{2}{2}}
= \frac{1}{[q^2,q^3,q^4,q^5;q^{16}]_\infty}(=\princhar{A_2^{(2)}}{5\Lambda_0}), \\ 
\sum_{i,j,k\ge0} (-1)^k \frac{\qnumerrr{}{8}{2}{2}{2}{4}{}{7}{3}}{\denommm{}{2}{2}}
= \frac{1}{[q,q^4,q^6,q^7;q^{16}]_\infty}(=\princhar{A_2^{(2)}}{\Lambda_0+2\Lambda_1}). 
\end{align*}
\end{Thm}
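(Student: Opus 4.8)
The plan is to strip off the summation variables one at a time, reducing each triple sum first to a double sum and then to the advertised product. \emph{First I would sum out $k$.} In both identities the part of the summand depending on $k$ is $(-1)^k q^{2\binom{k}{2}+2ik+4jk+ck}/(q^2;q^2)_k$, with $c=1$ in the first and $c=3$ in the second; since $2\binom{k}{2}+k=k^2$ this equals $(-1)^k q^{k^2}\bigl(q^{2i+4j+c-1}\bigr)^k/(q^2;q^2)_k$, so by the specialization $\sum_{k\ge 0}(-1)^k q^{k^2}z^k/(q^2;q^2)_k=(qz;q^2)_\infty$ of Euler's identity $\sum_{n\ge0}q^{\binom{n}{2}}t^n/(q;q)_n=(-t;q)_\infty$, the $k$-sum telescopes to $(q^{2i+4j+c};q^2)_\infty=(q^c;q^2)_\infty/(q^c;q^2)_{i+2j}$. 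Interchanging the order of summation (valid on the level of formal power series in $q$), each triple sum collapses to $(q^c;q^2)_\infty$ times a double sum, so the theorem is equivalent to
\begin{align*}
\sum_{i,j\ge 0}\frac{q^{\binom{i}{2}+8\binom{j}{2}+2ij+i+5j}}{(q;q)_i (q^2;q^2)_j (q;q^2)_{i+2j}}&=\frac{1}{(q;q^2)_\infty\,[q^2,q^3,q^4,q^5;q^{16}]_\infty},\\
\sum_{i,j\ge 0}\frac{q^{\binom{i}{2}+8\binom{j}{2}+2ij+i+7j}}{(q;q)_i (q^2;q^2)_j (q^3;q^2)_{i+2j}}&=\frac{1}{(q^3;q^2)_\infty\,[q,q^4,q^6,q^7;q^{16}]_\infty}.
\end{align*}

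For the double sum I would introduce an auxiliary variable $x$; taking the first identity, set $G(x)=\sum_{i,j\ge 0}x^{i+2j}q^{\binom{i}{2}+8\binom{j}{2}+2ij+i+5j}/\bigl((q;q)_i(q^2;q^2)_j(q;q^2)_{i+2j}\bigr)$, so that the sum in question is $G(1)$, and derive a $q$-difference equation for $G$. Peeling off the contributions with $i\mapsto i-1$ and with $j\mapsto j-1$ and using $(q;q^2)_{i+2j}=(1-q)(q^{3};q^2)_{i+2j-1}$ should produce a (possibly coupled) system of $q$-difference equations for $G$ with polynomial coefficients --- I would expect something of the shape $G(x)=G(xq^2)+(\text{monomial in }x,q)\,G(xq^4)$ after elimination. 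One then writes the product side, via the Jacobi triple product identity, as a theta quotient $\Theta(x)$ normalized so that $\Theta(0)=1$, checks that $\Theta$ satisfies the same $q$-difference system, and concludes by comparing the finitely many leading Taylor coefficients in $x$ that the system pins down. The second identity may instead be deduced from the first by a substitution $x\mapsto xq^{t}$ plus a renormalization, since the two summands differ only in their linear exponents; a more combinatorial route --- reading each side as the generating function of a suitable family of partitions and arguing by a ``linked partition ideal'' as in the proofs of the Kanade--Russell identities --- is also available.

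The parenthetical equalities with $\princhar{A_2^{(2)}}{5\Lambda_0}$ and $\princhar{A_2^{(2)}}{\Lambda_0+2\Lambda_1}$ need no representation theory for the series identities themselves: they come from Lepowsky's numerator formula for $A_2^{(2)}$ (the same mechanism behind the Lepowsky--Milne observation recalled in \S\ref{RRintro}), which expands each principal character into exactly the displayed product. The real work is the middle step: the Pochhammer symbol $(q^c;q^2)_{i+2j}$ couples the two summation variables and carries the ``wrong'' modulus $2$, so extracting a clean $q$-difference system --- and, above all, checking that the modulus-$16$ theta quotient on the right satisfies it --- is delicate; I expect essentially all the difficulty to lie in finding the correct auxiliary function(s) and verifying the functional equation on the product side, after which matching the initial coefficients is routine.
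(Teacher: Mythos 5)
Your first step is correct and is essentially a reorganization of the paper's first step: summing out $k$ with Euler's identity $(\eqEulerr)$ (base $q^2$) indeed gives $(q^{2i+4j+c};q^2)_\infty=(q^c;q^2)_\infty/(q^c;q^2)_{i+2j}$ with $c=1+2a$, so the theorem is equivalent to your two double-sum identities. (The paper instead folds $j$ and $k$ together along $M=j+k$ using Lemma \ref{AUXLEM}\,\eqref{eq:ms:fold:2,2:1,0:6,0,2,4,0:1,1}, proved by the $q$-WZ method, but these are just two bookkeepings of the same elimination.) The parenthetical identifications with $\princhar{A_2^{(2)}}{5\Lambda_0}$ and $\princhar{A_2^{(2)}}{\Lambda_0+2\Lambda_1}$ via the numerator formula are also fine.

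The rest, however, is a plan rather than a proof, and the plan has a genuine gap exactly where the content lies. You never derive the asserted $q$-difference system for $G(x)$, and, more seriously, the strategy of ``writing the product side as a theta quotient $\Theta(x)$ and checking it satisfies the same system'' has no candidate $\Theta(x)$: the right-hand side $1/\bigl((q^c;q^2)_\infty[q^2,q^3,q^4,q^5;q^{16}]_\infty\bigr)$ carries no $x$, and for Rogers--Ramanujan/Slater-type identities the two-variable sum is precisely \emph{not} expressible as a closed theta-quotient in $x$ (this is the classical obstruction already visible for \eqref{RRidentities}); so ``verifying the functional equation on the product side'' is not a finite check but the whole difficulty, and nothing in the proposal addresses it. Likewise, deducing the second identity from the first by a substitution $x\mapsto xq^t$ cannot work: the two modulus-16 products are genuinely different companions, not rescalings of one another. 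The paper closes this gap differently: after the elimination it applies $(\eqEulerr)$ to the $i$-sum, giving $(-q^{2M+1};q)_\infty=(q;q)_{2M}/\bigl((q;q^2)_\infty(q^2;q^2)_{2M}\bigr)$, then Lemma \ref{AUXLEM}\,\eqref{eq:ms:fold:2,2:1,0:2,0,0,1,0:1,1} and one more use of $(\eqEulerr)$ to collapse everything to the single sums $\sum_{n\ge0}q^{2n^2+2an}/(q;q)_{2n+a}$, and then the modulus-16 evaluation is supplied by Slater's identities (39) and (38), quoted as Theorem \ref{SLATERTHM}. If you want to salvage your route, you must either continue the reduction down to such a known single-sum identity (or a Bailey-pair argument for it), or supply an actual proof of your double-sum identities; as written, the key identity is assumed rather than proved.
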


\begin{Thm}\label{theo:A22:7:1}
Concerning level 7 standard modules for $A^{(2)}_2$, we have
\begin{align*}
\sum_{i,j,k\ge0}\frac{q^\numerrr{}{8}{10}{2}{2}{8}{}{4}{5}}{\qp[i]{}\qp[j]{2}\qp[k]{2}}
= \frac{1}{[q,q^3,q^4,q^5,q^7,q^9;q^{20}]_\infty}(=\princhar{A^{(2)}_{2}}{5\Lambda_0+\Lambda_1}), \\ 
\sum_{i,j,k\ge0}\frac{q^\numerrr{}{8}{10}{2}{2}{8}{}{8}{9}}{\qp[i]{}\qp[j]{2}\qp[k]{2}}
= \frac{1}{[q,q^3,q^5,q^7,q^8,q^9;q^{20}]_\infty}(=\princhar{A^{(2)}_{2}}{\Lambda_0+3\Lambda_1}). 
\end{align*}
\end{Thm}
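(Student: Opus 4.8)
The plan is to prove Theorem \ref{theo:A22:7:1} by establishing the two $q$-series identities directly, treating the connection with $\chi_{A^{(2)}_2}$ (the parenthetical remarks) as known from Lepowsky's numerator formula and the tables in the cited literature. So the real content is to show
\[
F_b(q):=\sum_{i,j,k\ge0}\frac{q^{8\binom{i}{2}+10\binom{j}{2}+2\binom{k}{2}+2ij+8ik+4j+bk}}{(q;q)_i(q^2;q^2)_j(q^2;q^2)_k}
\]
equals the stated infinite product for $(b,\text{product})=(5,\,1/[q,q^3,q^4,q^5,q^7,q^9;q^{20}]_\infty)$ and for $(b,\text{product})=(9,\,1/[q,q^3,q^5,q^7,q^8,q^9;q^{20}]_\infty)$. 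Because the theta-like denominators have modulus $20$, these are companions to the level-5 case (modulus $16$) of Theorem \ref{theo:A22:l5}, and I expect the same machinery to apply with different parameters.

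First I would try to collapse the triple sum to a double sum. The variables $j$ and $k$ appear with bases $q^2$; I would sum over one of them first using a $q$-hypergeometric summation. Fixing $i$ and $k$ and looking at the $j$-sum, the exponent is quadratic in $j$ with linear term $(4+2i)j$ and denominator $(q^2;q^2)_j$, so $\sum_j q^{10\binom{j}{2}+(4+2i)j}/(q^2;q^2)_j$ is an instance of a partial theta / Euler-type sum; more promisingly, I would instead sum over $k$ first, since $k$ appears in $2\binom{k}{2}+(8i+b)k$ over $(q^2;q^2)_k$, which by the $q$-binomial theorem (Cauchy identity, $\sum_k z^k q^{\binom{k}{2}}/(q;q)_k=(-z;q)_\infty$ with base $q^2$) gives a clean product $(-q^{8i+b-1};q^2)_\infty$ — wait, the exponent $2\binom{k}{2}$ means base $q^2$ and $q^{2\binom{k}{2}}=(q^2)^{\binom{k}{2}}$, so indeed $\sum_k q^{2\binom{k}{2}}q^{(8i+b)k}/(q^2;q^2)_k=(-q^{8i+b};q^2)_\infty/(\text{nothing})$ is NOT finite unless there is a compensating denominator; so more likely the intended manipulation keeps a finite inner sum. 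The realistic route, following the structure of such proofs, is: (i) reduce to a double sum indexed by $(i,j)$ whose summand, after the $k$-sum, is a single $q^{\text{quadratic}}$ times a ratio of Pochhammers; (ii) recognize the resulting double sum as a known Nahm-type or Rogers–Ramanujan–Slater sum whose product side is tabulated, or alternatively set up a functional equation.

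The cleaner and more robust strategy — and the one I would actually carry out — is to mimic the \emph{recurrence/uniqueness} method: define $f_b(q)$ as the left-hand side, show it satisfies a $q$-difference equation (obtained by the standard trick of splitting each sum according to whether the innermost index is $0$ or positive, i.e. $n\mapsto n-1$ shifts), derive from this a finite-order linear recurrence with polynomial coefficients for the coefficients of $f_b$, verify that the conjectured infinite product satisfies the same recurrence (its logarithmic derivative gives a linear recurrence because it is an eta-quotient-like product with periodic exponents mod $20$), and finally match enough initial terms. Concretely: expand both sides as power series in $q$ to high order (the number of terms needed is bounded by the order of the common recurrence plus its degree), check equality there, and invoke the recurrence to propagate equality to all orders. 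Since the modulus is $20$ and the series is $3$-fold, the recurrence order will be modest but the bookkeeping nontrivial; this is where I would lean on a symbolic computation, exactly as is standard for Andrews–Gordon type proofs.

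The main obstacle I anticipate is \emph{producing the $q$-difference equation in closed form}: the three sums are coupled through the cross terms $2ij$, $8ik$ in the exponent, so the naive ``peel off one variable'' move does not immediately decouple, and one must find the right combination of shifts $(i,j,k)\mapsto(i-1,j,k)$, etc., and the right auxiliary functions (typically a small finite family $\{f_b\}_b$ indexed by the residual linear shifts $b=5,9$ and possibly intermediate values) that closes under these operations. Establishing that this finite system of functional equations is \emph{equivalent} to the system satisfied by the infinite products — and in particular that the solution space is one-dimensional so that matching initial terms suffices — is the crux; everything after that (the finite verification) is mechanical. If a direct summation collapsing $k$ (or $j$) to a theta function works, that would bypass the recurrence entirely and reduce Theorem \ref{theo:A22:7:1} to a classical Rogers–Ramanujan–Slater identity, and I would attempt that shortcut first before committing to the recurrence route.
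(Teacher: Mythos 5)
There is a genuine gap, and it starts with the summand itself: unwinding the macro, the exponent in the theorem is $\binom{i}{2}+8\binom{j}{2}+10\binom{k}{2}+2ij+2ik+8jk+i+(4+4a)j+(5+4a)k$ for $a=0,1$, so the base-$q$ variable carries a $\binom{i}{2}+i$ part and, crucially, the two base-$q^2$ variables are coupled by an $8jk$ cross term. Your one concrete attempt at a shortcut (summing over $k$ alone by Euler's identity) only looks plausible because your misread exponent has no $jk$ term; with the correct summand no single-variable summation decouples. The key idea of the actual proof, which is absent from your proposal, is to sum $j,k$ along anti-diagonals: for $j+k=M$ one has $8\binom{j}{2}+10\binom{k}{2}+8jk=8\binom{M}{2}+2\binom{k}{2}$, so the inner sum over $j+k=M$ is exactly part (2) of Lemma \ref{AUXLEM} with $q$ replaced by $-q$, yielding $(-q;q^2)_M/(q^2;q^2)_M$. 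Then the $i$-sum is Euler's identity $(\eqEulerr)$, $\sum_i q^{\binom{i}{2}+(1+2M)i}/(q;q)_i=(-q^{1+2M};q)_\infty=1/\bigl((q;q^2)_\infty(-q;q)_{2M}\bigr)$, and after the elementary simplification $(-q;q^2)_M/\bigl((q^2;q^2)_M(-q;q)_{2M}\bigr)=1/(q^4;q^4)_M$ one is left with $\sum_M q^{4M^2+4aM}/(q^4;q^4)_M$, i.e.\ the Rogers--Ramanujan identities \eqref{RRidentities} in base $q^4$, whose modulus-$20$ products (divided back by $(q;q^2)_\infty$) are precisely the stated ones. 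So your hoped-for reduction to a classical single-sum identity does exist, but it requires this diagonal-collapse lemma, not a termwise summation in $j$ or $k$.

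Your declared fallback --- a $q$-difference system, a uniqueness argument, and a finite verification --- is only a plan: you do not derive the functional equations, do not exhibit the closed finite family of auxiliary series, do not prove that the solution space is pinned down by initial terms, and do not verify that the infinite products satisfy the same system; you yourself flag these points as the crux and leave them unresolved. Such computer-assisted arguments can be made rigorous, but as written the proposal neither carries out that route nor finds the direct reduction, so it does not yet constitute a proof of Theorem \ref{theo:A22:7:1}.
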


\begin{Thm}\label{theo:A22:7:2}
Concerning level 7 standard modules for $A^{(2)}_2$, we have
\begin{align*}
  {} &{} \sum_{i,j,k,\ell\ge0} (-1)^k \frac{q^{\quadddd{}{2}{2}{8} + \inteeee{}{}{2}{4}{4}{4} + \lineeee{}{3}{}{6} } }{\qp[i]{} \qp[j]{2} \qp[k]{2} \qp[l]{4}}\\
  &= \frac{1}{[q^2,q^3,q^4,q^5,q^6,q^7;q^{20}]_\infty}(=\princhar{A^{(2)}_{2}}{7\Lambda_0}), \\
  {} &{} \sum_{i,j,k,\ell\ge0} (-1)^k \frac{q^{\quadddd{}{2}{2}{8} + \inteeee{}{}{2}{4}{4}{4} + \lineeee{}{2}{4}{8} } }{\qp[i]{} \qp[j]{2} \qp[k]{2} \qp[l]{4}}\\
  &= \frac{1}{[q,q^2,q^5,q^6,q^8,q^9;q^{20}]_\infty}(=\princhar{A^{(2)}_{2}}{3\Lambda_0+2\Lambda_1}).
\end{align*}
\end{Thm}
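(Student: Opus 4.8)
The plan is to remove one summation variable by an Euler evaluation and then to identify the remaining multi-sum with the stated modulus-$20$ product by a $q$-difference equation argument. In every one of the four series the variable $i$ occurs in the exponent only through $\binom{i}{2}+i(j+k+2\ell+1)$ and is paired with $(q;q)_i$, so Euler's identity $(-z;q)_\infty=\sum_{i\ge0}q^{\binom{i}{2}}z^i/(q;q)_i$ gives
\begin{align*}
\sum_{i\ge0}\frac{q^{\binom{i}{2}+(j+k+2\ell+1)i}}{(q;q)_i}=(-q^{j+k+2\ell+1};q)_\infty .
\end{align*}
Hence, for instance, the first identity of the theorem is equivalent to
\begin{align*}
\sum_{j,k,\ell\ge0}(-1)^k\,\frac{q^{2\binom{j}{2}+2\binom{k}{2}+8\binom{\ell}{2}+4jk+4j\ell+4k\ell+3j+k+6\ell}\,(-q^{j+k+2\ell+1};q)_\infty}{(q^2;q^2)_j\,(q^2;q^2)_k\,(q^4;q^4)_\ell}=\frac{1}{[q^2,q^3,q^4,q^5,q^6,q^7;q^{20}]_\infty},
\end{align*}
and the remaining three are of the same shape, differing only in the linear part of the exponent. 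I would then try to cut the dimension down further: the factor $(-q^{j+k+2\ell+1};q)_\infty$ may be split into its $q^2$-subproducts and partially absorbed into the $j$- and $k$-sums, and those two sums may be merged via a $q$-Vandermonde type evaluation (using $2\binom{j}{2}+2\binom{k}{2}+4jk=2\binom{j+k}{2}+2jk$); the target is a two-fold, ideally one-fold, series whose surviving ``hard'' variable is $\ell$ --- the one carrying $q^{8\binom{\ell}{2}}$ against $(q^4;q^4)_\ell$, which is not of clean Euler shape.

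For the concluding sum-to-product step I would introduce an auxiliary parameter $z$. Replacing $q^{6\ell}$ by $(zq^6)^\ell$ (or inserting a moving boundary that truncates the $\ell$-sum) produces a one-parameter family $F(z)$ on the series side; decomposing $F(z)$ according to the value of the outermost index, or peeling off the $\ell=0$ layer and reindexing, then yields a finite linear $q$-difference system relating $F(z),F(qz),F(q^2z),\dots$ with explicit polynomial coefficients. One next verifies that the corresponding deformation $G(z)$ of the modulus-$20$ product satisfies the \emph{same} system, using only the classical shift relations of the factors $[q^a;q^{20}]_\infty$ and of $(q;q)_\infty$, $(q^2;q^2)_\infty$. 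Such a system determines all the Taylor coefficients of its solution from a bounded amount of initial data, so matching $F$ and $G$ in low degree in $z$ --- the relevant values being read off from the $\ell=0$, resp.\ $j=0$, layers --- forces $F\equiv G$, and specializing $z=1$ gives the theorem. A parallel route, avoiding the parameter, is to recognize the inner sum (after the $i$-reduction) as the iterate of a Bailey pair relative to base $q^2$, with a further Bailey step at base $q^4$ for the $\ell$-variable, in analogy with the way the Andrews--Gordon product of Theorem~\ref{AGTHM} is produced by iterating the unit Bailey pair; the outer summations then telescope into the desired modulus-$20$ product.

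The step I expect to be the main obstacle is precisely this last one. The modulus-$20$ products occurring here are not obtained from a single application of the Bailey chain to $(q;q)_n$, so one cannot merely quote a known Rogers--Ramanujan--Slater identity; the crux is to determine, for each of the weights $7\Lambda_0$ and $3\Lambda_0+2\Lambda_1$, the correct $q$-difference system --- equivalently, the correct Bailey pair with base $q^2$ or $q^4$, as forced by the denominators $(q^2;q^2)_j$, $(q^2;q^2)_k$, $(q^4;q^4)_\ell$ --- and then to check, by an honest but finite computation, that the product side solves it. Two further points need attention. First, the reduced series carries $q$-Pochhammer factors whose arguments couple the summation indices, such as $(-q^{j+k+2\ell+1};q)_\infty$ and (once the $i$- and $k$-sums are performed) factors of the form $(q^{2\ell};q^2)_{2j+\ell}$; before the machinery of the second step applies these must be disentangled --- e.g.\ via $(q^{2\ell};q^2)_{2j+\ell}=(q^{2\ell};q^2)_\ell\,(q^{4\ell};q^4)_j\,(q^{4\ell+2};q^4)_j$ --- into a shape with a single, clean base. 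Second, one must confirm that the parameter shift carrying the first series to the second (its linear exponent changing from $3j+k+6\ell$ to $2j+4k+8\ell$) corresponds on the product side exactly to $[q^2,q^3,q^4,q^5,q^6,q^7;q^{20}]_\infty\mapsto[q,q^2,q^5,q^6,q^8,q^9;q^{20}]_\infty$, which mirrors the translation in $P^{+}$ relating $7\Lambda_0$ and $3\Lambda_0+2\Lambda_1$. With the $\ell$-recursion (or the Bailey pair) established, the remaining verifications are routine.
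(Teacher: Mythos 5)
Your opening move is sound and is in fact the same observation the paper uses: in both series the $i$-dependence is $\binom{i}{2}+i(j+k+2\ell+1)$ against $(q;q)_i$, so Euler's identity removes $i$ and leaves the factor $(-q^{j+k+2\ell+1};q)_\infty$, which depends only on $M=j+k+2\ell$. From that point on, however, your text is a plan rather than a proof, and the two decisive steps are exactly the ones you leave open. The collapse of the remaining $(j,k,\ell)$-sum is only gestured at (``may be merged via a $q$-Vandermonde type evaluation''); the paper carries it out by slicing along $M=j+k+2\ell$ and invoking Lemma~\ref{AUXLEM}~(3) with $q$ replaced by $q^2$, namely
\[
\sum_{\substack{i,j,k\ge0\\ i+j+2k=M}}(-1)^j\,\frac{q^{ij+j+k}}{(q;q)_i\,(q;q)_j\,(q^2;q^2)_k}\;=\;\frac{(-q;q)_{[M/2]}}{(q;q)_{[M/2]}},
\]
an identity that itself needs a generating-function argument with Euler's identities and the $q$-binomial theorem; nothing in your sketch produces it or a substitute, and the signs $(-1)^{k}$ versus $(-1)^{j}$ (after the $j\leftrightarrow k$ swap that aligns the two cases) have to be tracked through precisely this lemma. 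A second pass of the same circle of ideas (Lemma~\ref{AUXLEM}~(1) with $q\mapsto -q$, then Euler once more) reduces everything to the single sums $\sum_{s\ge0}q^{s^2+s}/(q;q)_{2s+a}$ for $a=0,1$.

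Your assessment of the endgame is also mistaken, and it misdirects the rest of the plan: you assert that the modulus-$20$ products cannot be reached by quoting a known Rogers--Ramanujan--Slater identity and therefore propose a from-scratch $z$-parameter $q$-difference system or Bailey-pair verification, which you yourself flag as the main obstacle and do not carry out (no difference system, no Bailey pair, no initial conditions are identified). In fact Slater's identities (99) and (94), quoted in the paper as Theorem~\ref{SLATERTHM}, state precisely that $\sum_{n\ge0}q^{n^2+n}/(q;q)_{2n}$ and $\sum_{n\ge0}q^{n^2+n}/(q;q)_{2n+1}$ equal the two stated products, so once the reduction above is done the conclusion is a citation. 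As written, then, the proposal has genuine gaps at both the multisum-to-single-sum reduction and the sum-to-product step; the $q$-difference route could conceivably be pushed through, but the premise that it is necessary is false, and the hard intermediate identity that makes the reduction work is missing.
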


Note that the level 5 module $V(3\Lambda_0+\Lambda_1)$ is missing in Theorem \ref{theo:A22:l5}, 
but
\begin{align*}
\princhar{A_2^{(2)}}{3\Lambda_0+\Lambda_1} = \frac{1}{[q,q^3,q^5,q^7;q^{16}]_\infty} = \frac{1}{(q;q^2)_\infty},
\end{align*}
can be written as a form of Andrews-Gordon type series (e.g. put $x=q$ in \S\ref{prep} (\eqEulerr)).

\subsection{Comments on the proofs}
The following steps are common
in proving that a multisum is equal to an infinite product.
\begin{enumerate}
\item[(S1)] Reduce the multisum to a single sum.
\item[(S2)] Search for lists of identities which deduce the desired result.
\end{enumerate}

The step (S1) for Theorem \ref{theo:A22:l5}, \ref{theo:A22:7:1}, \ref{theo:A22:7:2}
uses a similar technique to the proof of Kanade-Russell's conjectures of modulo 12~\cite[$I_5,I_6$]{KR}
by Bringmann et.al.~\cite[\S4.9, \S4.10]{BJM}.
For (S2), we employ Slater's list~\cite{Sla} (see also Remark \ref{otherslater}).
\begin{Thm}[{\cite[(39)=(83),(38)=(86),(99),(94)]{Sla}}]\label{SLATERTHM}
\begin{alignat*}{3}
\sum_{n\ge0} \frac{q^{2n^2}}{(q;q)_{2n}} &= \princhar{A_2^{(2)}}{5\Lambda_0}, & 
{\quad} & \sum_{n\ge0} \frac{q^{2n^2+2n}}{(q;q)_{2n+1}} &= \princhar{A_2^{(2)}}{\Lambda_0+2\Lambda_1}, \\
\sum_{n\ge0} \frac{q^{n^2+n}}{(q;q)_{2n}} &= \princhar{A^{(2)}_{2}}{7\Lambda_0}, & 
{\quad} & \sum_{n\ge0} \frac{q^{n^2+n}}{(q;q)_{2n+1}} &= \princhar{A^{(2)}_{2}}{3\Lambda_0+2\Lambda_1}.
\end{alignat*}
\end{Thm}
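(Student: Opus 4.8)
The plan is to treat each line of the theorem as a composition of two facts: a Rogers--Ramanujan--Slater identity expressing the single sum as an explicit infinite product of the shape $1/[q^{a_1},q^{a_2},q^{a_3},q^{a_4};q^{16}]_\infty$ for the first two lines and $1/[q^{a_1},\dots,q^{a_6};q^{20}]_\infty$ for the last two, together with the identification of that product with the principal character on the right. The second fact is precisely the content of the parenthetical equalities already recorded in Theorems~\ref{theo:A22:l5}--\ref{theo:A22:7:2}, and it is the standard consequence of Lepowsky's numerator formula (cf.~\cite{Bos}); so I would take it as known and concentrate on the four sum-to-product identities.

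For those I would use the Bailey pair machinery. The reorganizations $(q;q)_{2n}=(q;q^2)_n(q^2;q^2)_n$ and $(q;q)_{2n+1}=(q;q^2)_{n+1}(q^2;q^2)_n$ exhibit each summand as a term of a $q$-hypergeometric series in the base $q^2$, and in each case $\beta_n:=1/(q;q)_{2n}$ (resp.\ $1/(q;q)_{2n+1}$) is the $\beta$-part of a standard Bailey pair --- one obtainable from the unit pair by a classical change-of-base transformation (Bailey, Bressoud) and precisely the pair underlying the relevant entry of Slater's list. Feeding the pair into the limiting form of Bailey's lemma turns $\sum_{n\ge0}(\text{monomial in }q)\,\beta_n$ into an elementary infinite product times $\sum_{n\ge0}(\text{monomial})\,\alpha_n$, and since the matching $\alpha_n$ is a theta-type coefficient --- a signed combination of two powers of $q$ --- the Jacobi triple product identity collapses the right-hand side to the asserted product. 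The two identities in each pair ($5\Lambda_0$ versus $\Lambda_0+2\Lambda_1$, and $7\Lambda_0$ versus $3\Lambda_0+2\Lambda_1$) come from the two natural normalizations --- the even-index sum relative to $a=1$ against the shifted odd-index sum --- which is the reason Slater records each of them twice, as $(39)=(83)$ and $(38)=(86)$; the mod-$20$ pair, whose exponent $n^2+n=2\binom{n+1}{2}$ is a slower-growing base-$q^2$ quadratic, uses a Rogers--Selberg-type pair in place of a Rogers--Ramanujan-type one, but the mechanism is identical.

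I expect the genuine difficulty to be bookkeeping rather than conceptual. The step most likely to trip one up is matching each of the four sums to the correct Bailey pair while tracking the base change $q\leftrightarrow q^2$ and the unit shift separating $(q;q)_{2n}$ from $(q;q)_{2n+1}$, and then pushing the Jacobi triple product evaluation through so that the modulus emerges as exactly $16$ (resp.\ $20$) with the correct four (resp.\ six) residues $a_i$. Since a number of Slater's original derivations were later found to need repair (Bressoud, Andrews--Sills, McLaughlin--Sills), I would cross-check the four entries against a modern corrected version of her list. The shortest fully honest route, of course, is simply to cite~\cite{Sla} for the sum-to-product identities and the remarks surrounding Theorem~\ref{AGTHM} together with~\cite{Bos} for the principal-character identification.
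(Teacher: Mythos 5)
Your proposal is consistent with the paper: Theorem \ref{SLATERTHM} is not proved there at all, being quoted directly from Slater's list \cite{Sla} (entries (39)=(83), (38)=(86), (99), (94)) with the infinite products recognized as principal characters via the numerator formula (cf.\ \cite{Bos}), which is precisely the ``shortest fully honest route'' you end with. Your Bailey-pair outline is a fair sketch of how those entries of \cite{Sla} are classically established, but as written it is only a sketch --- the specific pairs, base changes and triple-product evaluations are not pinned down --- so the operative content of your argument, like the paper's, is the citation.
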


\subsection{Toward $A^{(2)}_2$ analog of the Andrews-Gordon identities}\label{Atwotwo}
Let us recall the previous studies for $A^{(2)}_2$. 
For the level 2 case, the principal characters are obtained by inflating $q$ to $q^2$ from the
infinite products in \eqref{RRidentities}. 
For the level 3 (resp. level 4) case, the vacuum spaces are studied in ~\cite{Cap0} (resp. in ~\cite{Nan}), 
which resulted in conjectural partition theorems (see ~\cite[Theorem 5.2, Theorem 5.3, Conjecture 5.5, Conjecture 5.6, Conjecture 5.7]{Sil}) that were later proved in ~\cite{An4,Cap1,TX} (resp. ~\cite{TT}). 
The Andrews-Gordon type series are known as ~\cite[Corollary 18]{Ku2} (resp. ~\cite{TT}), which
are duplicated in Theorem \ref{KUres} (resp. Remark \ref{recstr}). For the level 3 case, see also \S\ref{Capsum}.

For the level 5 and 7 cases, the infinite products in 
Theorem \ref{theo:A22:l5}, Theorem \ref{theo:A22:7:1}, Theorem \ref{theo:A22:7:2}
appear in 
~\cite[Theorem 4, Theorem 3]{Hir}, ~\cite[Theorem 1]{Hir} and ~\cite[Theorem 1.7]{Cap2}, 
~\cite[Theorem 1.6]{Cap2} and ~\cite[Theorem 2]{Hir} respectively.
Those partition theorems look quite different from those for the aforementioned level 3 and 4 cases~\cite{Cap0,Nan}.
It is natural to investigate partition theorems via the vertex operators (other than ~\cite{Hir,Cap2})
that are related with our identities (like Proposition \ref{KUresss}).
As far as we know, almost nothing is known on the level 6 case except ~\cite[(1.3)--(1.6)]{MS} (see also ~\cite{Sil2}).

\subsection{Toward $A^{(2)}_{\textrm{odd}}$, level 2 analog of the Andrews-Gordon identities}
Recently, related to the expectation mentioned in \S\ref{RRintro}, 
some (conjectural) Andrews-Gordon type series for other affine Dynkin diagrams were found.
A famous example is Kur\c{s}ung\"oz's reformulation~\cite[Conjecture 6.1]{Kur} 
of Kanade-Russell conjectures of modulo 9~\cite{KR},
which are regarded as the level 3 identities of $D^{(3)}_4$ (see ~\cite[\S5.2]{Sil}).

Another actively studied levels and Lie types are level 2 of $A^{(2)}_{2\ell+1}$ (see ~\cite[\S1.1]{KR2}).
When $\ell=2$, 
Andrews-Gordon type series for $\chi_{A^{(2)}_5}(\Lambda_0+\Lambda_1),\chi_{A^{(2)}_5}(\Lambda_3)$ are found as in ~\cite[(21),(22)]{Ku2}. As shown by ~\cite{Kan}, they are related with the classical G\"ollnitz partition 
theorems~\cite[Theorem 2.42, Theorem 2.43]{Sil}.
When $\ell=3$, $\chi_{A^{(2)}_7}(\Lambda_0+\Lambda_1),\chi_{A^{(2)}_7}(\Lambda_3)$
are the infinite products in \eqref{RRidentities}. 
When $\ell=4$, Bringmann et.al.~\cite[\S3, \S4.1, \S4.2]{BJM} 
proved
Andrews-Gordon type series for
$\chi_{A^{(2)}_9}(\Lambda_0+\Lambda_1),\chi_{A^{(2)}_9}(\Lambda_3),\chi_{A^{(2)}_9}(\Lambda_5)$
which were conjectured in ~\cite[(3.2),(3.4),(3.6)]{KR2} together with
interpretations as partition theorems. See also ~\cite{Ros}.

Noting the principal characters for level 4 modules of $A^{(2)}_2$ coincide with
some of those for level 2 of $A^{(2)}_{11}$ (see Remark \ref{recstr}), 
in Conjecture \ref{conj:A13:3sum:1}, Conjecture \ref{conj:A13:3sum:2}, Conjecture \ref{conj:A13:4sum:1})
we give conjectural Andrews-Gordon type series for 
\begin{align*}
\chi_{A^{(2)}_{13}}(\Lambda_0+\Lambda_1),\quad
\chi_{A^{(2)}_{13}}(\Lambda_3),\quad
\chi_{A^{(2)}_{13}}(\Lambda_5),\quad
\chi_{A^{(2)}_{13}}(\Lambda_7).
\end{align*}

It would be interesting if one can find a pattern in Andrews-Gordon type series obtained so far.
For example, some of them for level 4 and 5 modules of $A^{(2)}_2$ (resp. level 2 of $A^{(2)}_{11}$
and $A^{(2)}_{13}$) share a recursive structure as in
Remark \ref{recstr} and Remark \ref{recstrr} that we can also observe in
the original (Theorem \ref{AGTHM}): namely, 
the Andrew-Gordon series for
$\chi_{A^{(1)}_1}((2(k-1)-i)\Lambda_0+(i-1)\Lambda_1)$
is obtained by deleting $n_{k-1}$ (or substituting $n_{k-1}=0$)
in that for $\chi_{A^{(1)}_1}((2k-i)\Lambda_0+(i-1)\Lambda_1)$.

We hope this paper contributes to the expectation mentioned in \S\ref{RRintro}
which has been shared in the community since Lepowsky-Milne's observation.

\hspace{0mm}

\noindent{\bf Organization of the paper.} 
The paper is organized as follows.
In \S\ref{prep}, we show some auxilary summation formulas via Euler's identities, the $q$-binomial theorem and
the $q$-WZ method. Then, we prove Theorem \ref{theo:A22:l5}, Theorem \ref{theo:A22:7:1}, Theorem \ref{theo:A22:7:2}
in \S\ref{pr:A22:l5}, \S\ref{pr:A22:l7:1}, \S\ref{pr:A22:l7:2} respectively.
In \S\ref{conjs} (resp. \S\ref{Capsum}), we discuss some Andrews--Gordon type series related to level 2 (resp. level 3) modules of $A^{(2)}_{13}$ (resp. $A^{(2)}_2$).

\hspace{0mm}

\noindent{\bf Acknowledgments.} 
We thank S.~Kanade and M.~Russell for helpful
discussions.
This work was supported by the Research Institute for Mathematical
Sciences, an International Joint Usage/Research Center located in Kyoto
University and the TSUBAME3.0 supercomputer at Tokyo Institute of Technology.
S.T.\,was supported in part by JSPS Kakenhi Grants 17K14154, 20K03506 and by Leading Initiative for Excellent Young Researchers, MEXT, Japan.
M.T.\,was supported in part by Start-up research support from Okayama University.

\section{Preparations}\label{prep}
Recall the \emph{Euler's identities} and the \emph{$q$-binomial theorem}~\cite[(II.1),(II,2),(II.3)]{GR}.
\begin{align*}
  \sum_{n\ge 0} \frac{x^n}{(q;q)_n} \stackrel{(\eqEuler)}{=} \frac{1}{(x;q)_\infty},
  \sum_{n\ge 0} \frac{q^{\binom{n}{2}} x^n}{(q;q)_n} \stackrel{(\eqEulerr)}{=} (-x;q)_\infty,
  \sum_{n\ge 0} \frac{(a;q)_n}{(q;q)_n} x^n \stackrel{(\eqbinom)}{=} \frac{(ax;q)_\infty}{(x;q)_\infty}.
\end{align*}

\begin{Lem}\label{AUXLEM}
For any $M\in\Z_{\ge 0}$, we have
\begin{enumerate}
\item\label{eq:ms:fold:22:00:0,0,0,0,1:11} $\displaystyle\gnahmsp{\qp[i]{2} \qp[j]{2}} {q^{j}}   {\substack{i,j\geq 0 \\ i+j=M}} = \frac{1}{\qp[M]{}}$,
\item\label{eq:ms:fold:2,2:1,0:2,0,0,1,0:1,1} $\displaystyle\gnahms{\qp[i]{2} \qp[j]{2}}    {j} {q^{2\binom{j}{2}+j}}   {\substack{i,j\geq 0 \\ i+j=M}} = \frac{\qpp[M]{}{2}}{\qp[M]{2}}$,
\item\label{eq:ms:fold:112:010:0,0,0;1,0,0;0,1,1:112} $\displaystyle\gnahms{\qp[i]{} \qp[j]{} \qp[k]{2}}    {j} {q^{ij+j+k}}   {\substack{i,j\geq 0 \\ i+j+2k=M}} = \frac{(-q;q)_{[M/2]}}{(q;q)_{[M/2]}}$,
\item\label{eq:ms:fold:2,2:1,0:6,0,2,4,0:1,1} $\displaystyle\gnahms{\qp[i]{} \qp[j]{}}    {j}   {q^{3\binom{j}{2}+ij+2j}} {\substack{i,j\geq 0 \\ i+j=M}} = \frac{\qp[2M]{}}{\qp[M]{}^2}$.
\end{enumerate}
\end{Lem}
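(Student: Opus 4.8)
The plan is to prove each of the four identities by the same elementary strategy: treat $M$ as a free parameter, pass from the finite sums indexed by compositions of $M$ to full power series in an auxiliary variable $x$ (tracking $M$), and then recognize the resulting one-variable $q$-series via the Euler identities $(\eqEuler)$, $(\eqEulerr)$ and the $q$-binomial theorem $(\eqbinom)$ recalled at the start of \S\ref{prep}. Concretely, for part \eqref{eq:ms:fold:22:00:0,0,0,0,1:11} I would multiply both sides by $x^M$ and sum over $M\ge 0$; the left side becomes $\bigl(\sum_{i\ge0} x^{2i}/\qp[i]{2}\bigr)\bigl(\sum_{j\ge0} (qx^2)^j/\qp[j]{2}\cdot x^{-j}\bigr)$ after substituting $M=i+j$ and distributing $x^M=x^i x^j$ — wait, more carefully: since $i+j=M$ the weight $q^j$ pairs with $x^{i+j}$, so the generating function factors as $\bigl(\sum_{i\ge0} x^i/\qp[i]{2}\bigr)\bigl(\sum_{j\ge0}(qx)^j/\qp[j]{2}\bigr)$, each factor being an instance of $(\eqEuler)$ with base $q^2$, giving $1/\bigl((x;q^2)_\infty (qx;q^2)_\infty\bigr)=1/(x;q)_\infty=\sum_{M\ge0} x^M/\qp[M]{}$, and comparing coefficients of $x^M$ yields the claim.

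For the remaining three parts I would run the same mechanism, choosing the base of the $q$-Pochhammer symbols and the auxiliary shifts so that each factor in the generating function is exactly one of $(\eqEuler)$, $(\eqEulerr)$, $(\eqbinom)$. In \eqref{eq:ms:fold:2,2:1,0:2,0,0,1,0:1,1}, the sign $(-1)^j$ together with $q^{2\binom{j}{2}+j}$ makes the $j$-factor an instance of $(\eqEulerr)$ in base $q^2$ (producing a $(x;q^2)_\infty$ numerator), while the $i$-factor is again $(\eqEuler)$ in base $q^2$; the product telescopes to $(qx;q^2)_\infty^{-1}\,(\text{something})$ matching the generating function $\sum_M \qpp[M]{}{2} x^M/\qp[M]{2}$ of the right-hand side, which one checks is $(xq;q^2)_\infty/(x;q)_\infty$ via $(\eqbinom)$ with $a=q$. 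Part \eqref{eq:ms:fold:112:010:0,0,0;1,0,0;0,1,1:112} has three summation variables with the constraint $i+j+2k=M$, so I would introduce $x^M=x^i x^j x^{2k}$; the presence of the cross term $q^{ij}$ means the $i$- and $j$-sums do not separate directly, so I would first sum over $j$ treating $q^i$ as a parameter — $\sum_{j\ge0}(-1)^j q^{ij+j} x^j/\qp[j]{}=\sum_j (-xq^{i+1})^j/\qp[j]{}$, another $(\eqEuler)$ — and then the $i$-sum becomes a $_1\phi_0$ collapsing by $(\eqbinom)$, finally folding in the $k$-sum $\sum_k x^{2k}/\qp[k]{2}$; the target $(-q;q)_{[M/2]}/(q;q)_{[M/2]}$ has generating function $1/(x^2;q^2)_\infty\cdot(-q;q^2)_\infty\!\cdot\!(\dots)$-type shape, so bookkeeping of the floor $[M/2]$ via the $x^{2k}$ grading is the delicate point. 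Part \eqref{eq:ms:fold:2,2:1,0:6,0,2,4,0:1,1} is structurally like \eqref{eq:ms:fold:2,2:1,0:2,0,0,1,0:1,1} but with the extra cross term $q^{ij}$, handled exactly as in \eqref{eq:ms:fold:112:010:0,0,0;1,0,0;0,1,1:112}: sum over $j$ first to get a $(\eqEulerr)$-type factor $(xq^2;q^2)_\infty$ or similar with parameter $q^i$, then the $i$-sum by $(\eqbinom)$, landing on the generating function $(q;q)_{2M}/(q;q)_M^2=\sum$ which is the $q$-binomial $\binom{2M}{M}_q$-generating series $1/(\text{suitable product})$.

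Alternatively — and this is probably cleaner to write up — each identity can be verified by induction on $M$: both sides satisfy a first-order recurrence in $M$ obtained by splitting off the terms with the last index equal to zero, and the base case $M=0$ is trivial. In either approach the main obstacle is the same: the two identities containing the cross term $q^{ij}$ (parts \eqref{eq:ms:fold:112:010:0,0,0;1,0,0;0,1,1:112} and \eqref{eq:ms:fold:2,2:1,0:6,0,2,4,0:1,1}), where the double sum does not factor and one must perform the inner sum first and recognize the outer sum as a terminating $q$-binomial-type series; matching the $[M/2]$ floor in \eqref{eq:ms:fold:112:010:0,0,0;1,0,0;0,1,1:112} against the even/odd parity of $i+j$ under the constraint $i+j+2k=M$ is the one spot that needs genuine care rather than routine manipulation. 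The $q$-WZ method mentioned in the Organization paragraph is presumably the intended tool for the cross-term cases, certifying the recurrence automatically; I would fall back on it if the direct generating-function computation becomes unwieldy.
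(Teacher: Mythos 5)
Your plan for parts \eqref{eq:ms:fold:22:00:0,0,0,0,1:11}--\eqref{eq:ms:fold:112:010:0,0,0;1,0,0;0,1,1:112} is sound and is essentially the paper's argument: pass to generating functions in $x$, use (\eqEuler)/(\eqEulerr) for the first two (for \eqref{eq:ms:fold:2,2:1,0:2,0,0,1,0:1,1} the right-hand side's generating function is $(qx;q^2)_\infty/(x;q^2)_\infty$ by (\eqbinom) in base $q^2$ -- note the base $q^2$, not $q$, in the denominator), and for \eqref{eq:ms:fold:112:010:0,0,0;1,0,0;0,1,1:112} do one inner sum first and finish with (\eqbinom); the paper sums over $i$ with $q^j$ as a parameter where you sum over $j$ with $q^i$ as a parameter, which is an equivalent route, and the floor is handled exactly as you anticipate, by checking $\sum_M \frac{(-q;q)_{[M/2]}}{(q;q)_{[M/2]}}x^M=(1+x)\frac{(-x^2q;q)_\infty}{(x^2;q)_\infty}$.

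The genuine gap is in your primary route for part \eqref{eq:ms:fold:2,2:1,0:6,0,2,4,0:1,1}. After introducing $x$, the inner $j$-sum is $\sum_{j}q^{3\binom j2}(-xq^{i+2})^j/\qp[j]{}$: since the quadratic power is $q^{3\binom j2}$ while the denominator is $\qp[j]{}$ (base $q$, not $q^3$), this is not an instance of (\eqEulerr) in any base; and if you instead sum over $i$ first, you are left with $\sum_j\frac{(x;q)_j}{\qp[j]{}}\,q^{3\binom j2}(-xq^2)^j$, which is not an instance of (\eqbinom) either. Worse, the target generating function $\sum_M\frac{\qp[2M]{}}{\qp[M]{}^2}x^M$ (central $q$-binomial coefficients) has no infinite-product evaluation, so the ``factor and recognize the product'' strategy cannot close here at all -- it is not merely unwieldy. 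The paper proves \eqref{eq:ms:fold:2,2:1,0:6,0,2,4,0:1,1} by exactly what you list as a fallback: normalize to $f_{M,j}$ with $\sum_j f_{M,j}=1$ and certify the recurrence by the $q$-WZ method, with the certificate $g_{M,j}$ found by the $q$-Zeilberger algorithm. (There is also an elementary fix you missed: with $i=M-j$ the exponent simplifies to $\binom j2+(M+1)j$, so after multiplying by $\qp[M]{}$ the left side is $\sum_{j=0}^{M}(-1)^jq^{\binom j2}q^{(M+1)j}\frac{\qp[M]{}}{\qp[j]{}\qp[M-j]{}}=(q^{M+1};q)_M=\qp[2M]{}/\qp[M]{}$ by the terminating $q$-binomial theorem, which gives the claim directly.) Your ``induction on $M$ via a first-order recurrence'' alternative is too vague to assess and would in any case need the same kind of certificate to verify the recurrence for this part.
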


\begin{proof}
By (\eqEuler), we get \eqref{eq:ms:fold:22:00:0,0,0,0,1:11} as follows.
\begin{align*}
\sum_{i,j\geq 0} \frac{q^j}{\qp[i]{2}\qp[j]{2}}x^{i+j}
= \frac{1}{(x;q^2)_\infty}\frac{1}{(xq;q^2)_\infty} 
= \frac{1}{(x;q)_\infty}
= \sum_{M\geq 0} \frac{x^M}{\qp[M]{}}.
\end{align*}
  
Similarly, we get \eqref{eq:ms:fold:2,2:1,0:2,0,0,1,0:1,1} by (\eqEuler), (\eqEulerr), (\eqbinom) as follows.
\begin{align*}
\sum_{i,j} \frac{x^i}{\qp[i]{2}}  \frac{(-1)^j q^{2\binom{j}{2}+j} x^j}{\qp[j]{2}}
= \frac{1}{(x;q^2)_\infty} (xq;q^2)_\infty
= \sum_{M\geq 0} \frac{\qpp[M]{}{2}}{\qp[M]{2}} x^M.
\end{align*}
  
To prove \eqref{eq:ms:fold:112:010:0,0,0;1,0,0;0,1,1:112}, we calculate 
the generating series of both sides by (\eqEuler), (\eqEulerr). 
\begin{align*}
  {} &\sum_{i,j,k\geq 0} (-1)^j \frac{q^{ij+j+k}}{\qp[i]{} \qp[j]{} \qp[k]{2}} x^{i+j+2k}=
  \bigg(\sum_{j\geq 0} \big(\frac{(-xq)^j}{\qp[j]{}} \sum_{i\geq 0} \frac{(xq^{j})^i}{\qp[i]{}}\big)\bigg)
  \cdot
  \sum_{k\geq 0} \frac{(x^2q)^k}{\qp[k]{2}}\\
&=
  \bigg(\sum_{j\geq 0} \frac{(-xq)^j}{\qp[j]{}} 
  \frac{1}{(xq^j;q)_\infty}\bigg)
  \cdot
  \frac{1}{(x^2q;q^2)_\infty}
  = 
  \frac{1}{(x;q)_\infty}
  \frac{1}{(x^2q;q^2)_\infty}
  \sum_{j\geq 0} \frac{(x;q)_j}{\qp[j]{}} (-xq)^j.
  \end{align*}
Thus, the generating series of the left hand side is equal to $\displaystyle\frac{1}{(x;q)_\infty}\frac{1}{(x^2q;q^2)_\infty}\frac{(-x^2q;q)_\infty}{(-xq;q)_\infty}$ 
by (\eqbinom). We easily see that it is equal to the generating series of the right hand side:
\begin{align*}
  \sum_{M\geq 0} \frac{(-q;q)_{[M/2]}}{(q;q)_{[M/2]}} x^M=
  \sum_{N\geq 0} \frac{(-q;q)_{N}}{(q;q)_{N}} x^{2N} (1 + x)=
  (1+x) \frac{(-x^2q;q)_\infty}{(x^2;q)_\infty}.
\end{align*}
    
In the proof of \eqref{eq:ms:fold:2,2:1,0:6,0,2,4,0:1,1},
we promise $1/(q;q)_n=0$ if $n<0$. 
For $M,j\geq 0$, we let
\begin{align*}
  f_{M,j} := (-1)^{j} \frac{
    q^{3\binom{j}{2}+(M-j)j+2j} \qp[M]{}^2
  }{\qp[M-j]{} \qp[j]{} \qp[2M]{}}
\end{align*}
so that it suffices to prove (by the $q$-WZ method~\cite{Koe}) $\sum_{j\geq 0} f_{M,j} = 1$ (note that $f_{M,j}=0$ when $j>M$).
The $q$-Zeilberger algorithm helps us finding an expression
\begin{align*}
g_{M,j}
= (-1)^j
\frac{
  (1 - q^{M+1-j} - q^{2M+2-j})
  q^{3\binom{j}{2}+(M-j)j+2j} 
  \qp[M]{}^2
}{
  (1+q^{M+1}) (1-q^{2M+1}) 
  \qp[M-j+1]{} \qp[j-1]{} \qp[2M]{}
}
\end{align*}
for which we can verify that 
$g_{M,j}\neq0\Rightarrow 0\le j-1\le M$ and 
$f_{M+1,j} - f_{M,j} = g_{M,j+1} - g_{M,j}$ for any $M,j\ge 0$.
This implies $\sum_{j\geq 0} f_{M+1,j} - \sum_{j\geq 0} f_{M,j}=0$ for any $M\ge 0$.
Now we only need to see $\sum_{j\geq 0} f_{0,j}=1$, which is obvious.
\end{proof}

\section{Proof of Theorem \ref{theo:A22:l5}}\label{pr:A22:l5}
Note that the Andrews-Gordon type series is of the form
\begin{align}
\sum_{i,j,k\ge0} (-1)^k \frac{\qnumerrr{}{8}{2}{2}{2}{4}{}{(5+2a)}{(1+2a)}}{\denommm{}{2}{2}}
\label{eq:l5:prf:1}
\end{align}
for $a=0,1$. We rewrite the inner sum on $j$ and $k$ as
\begin{align*}
  {} &{} \sum_{j,k\ge0} (-1)^{k}
  \frac{q^{{8}\binom{j}{2} + {2}\binom{k}{2} + {4}jk + {(5+2a+2i)}j + ({1+2a+2i})k}}{\qp[j]{2} \qp[k]{2}}\\
  &= \sum_{M\ge0} 
  \sums{j,k\ge0 \\ j+k=M} (-1)^{j+M}
  \frac{q^{{6}\binom{j}{2} + {2}jk + 4j + 2\binom{M}{2} + (2a+2i+1)M}}{\qp[j]{2} \qp[k]{2}}.
\end{align*}
By Lemma \ref{AUXLEM} \eqref{eq:ms:fold:2,2:1,0:6,0,2,4,0:1,1} (with $q$ replaced by $q^2$), we see
\eqref{eq:l5:prf:1} is reduced to
\begin{align}
\sum_{i,M\ge0}(-1)^M \frac{(q^2;q^2)_{2M}}{(q;q)_i (q^2;q^2)_M^2}q^{\binom{i}{2}+M^2+2iM+i+2aM}.
\label{eq:l5:prf:2}
\end{align}
With (\eqEulerr), the inner sum on $i$ is rewritten as
\begin{align*}
\sum_{i\ge0} 
\frac{q^{\binom{i}{2}+(2M+1)i}}{(q;q)_i}
= (-q^{2M+1};q)_\infty
= \frac{(-q;q)_\infty}{(-q;q)_{2M}}
= \frac{1}{(q;q^2)_\infty} \frac{(q;q)_{2M}}{(q^2;q^2)_{2M}}.
\end{align*}
Hence,  by Lemma \ref{AUXLEM} \eqref{eq:ms:fold:2,2:1,0:2,0,0,1,0:1,1}, we see \eqref{eq:l5:prf:2}$\cdot(q;q^2)_\infty$ is equal to
\begin{align*}
\sum_{M\ge0} (-1)^M \frac{q^{M^2+2aM} (q;q^2)_M}{(q^2;q^2)_M}
= \sum_{m,n\ge0} (-1)^m \frac{q^{2\binom{m}{2}+4\binom{n}{2}+2mn+(1+2a)m+(2+2a)n}}{\qp[m]{2}\qp[n]{2}}.
\end{align*}
Finally we use (\eqEulerr) to rewrite the inner sum on $m$ as
\begin{align*}
\sum_{m\ge0}
(-1)^m
\frac{q^{2\binom{m}{2}+(1+2a+2n)m}}{\qp[m]{2}}
= (q^{1+2a+2n};q^2)_\infty
= \frac{(q;q^2)_\infty}{(q;q^2)_{n+a}}.
\end{align*}
Thus, we see $\displaystyle\eqref{eq:l5:prf:1}=\frac{\eqref{eq:l5:prf:2}\cdot(q;q^2)_\infty}{(q;q^2)_\infty}$
is equal to
\begin{align*}
\sum_{n\ge0} \frac{q^{4\binom{n}{2}+(2+2a)n}}{(q;q^2)_{n+a}(q^2;q^2)_n}
=\sum_{n\ge0} \frac{q^{2n^2+2an}}{(q;q)_{2n+a}},
\end{align*}
which proves Theorem \ref{theo:A22:l5} in virtue of Theorem \ref{SLATERTHM}.

\begin{Rem}\label{recstr}
  %
  In \cite[Theorem 2.2]{TT}
  the following identities 
  are shown:
  \begin{align}
    \sum_{i,k\ge0} (-1)^k \frac{q^{\binom{i}{2} + 2\binom{k}{2} + 2ik + i+k}}{\qp[i]{} \qp[k]{2}}
    &= \frac{1}{[q^2,q^3,q^4;q^{14}]_\infty}
    (=\princhar{A^{(2)}_{2}}{4\Lambda_0}=\princhar{A^{(2)}_{11}}{\Lambda_3}), 
    \label{eq:AG:234:14}
    \\
    \sum_{i,k\ge0} (-1)^k \frac{q^{\binom{i}{2} + 2\binom{k}{2} + 2ik + i+3k}}{\qp[i]{} \qp[k]{2}}
    &= \frac{1}{[q,q^4,q^6;q^{14}]_\infty}
    (=\princhar{A^{(2)}_{2}}{2\Lambda_0+\Lambda_1}=\princhar{A^{(2)}_{11}}{\Lambda_0+\Lambda_1}),
    \label{eq:AG:146:14}
    \\
    \sum_{i,k\ge0} (-1)^k \frac{q^{\binom{i}{2} + 2\binom{k}{2} + 2ik + 2i+3k}}{\qp[i]{} \qp[k]{2}}
    &= \frac{1}{[q^2,q^5,q^6;q^{14}]_\infty}
    (=\princhar{A^{(2)}_{2}}{2\Lambda_1}=\princhar{A^{(2)}_{11}}{\Lambda_5}).
    \label{eq:AG:256:14}
  \end{align}
We remark that \eqref{eq:AG:234:14} and \eqref{eq:AG:146:14} 
coincide with
double sums obtained by
taking the ``$j=0$ part'' of the triple sums in
Theorem \ref{theo:A22:l5}.
\end{Rem}

\section{Proof of Theorem \ref{theo:A22:7:1}}\label{pr:A22:l7:1}
Note that the Andrews-Gordon type series is of the form
\begin{align}\label{eq:l7:prf:1}
\sum_{i,j,k\ge0}\frac{q^{\quaddd{}{8}{10} + \inteee{2}{2}{8} + \lineee{}{(4+4a)}{(5+4a)} }}{\denommm{}{2}{2}}
\end{align}
for $a=0,1$. 
We rewrite the inner sum on $j$ and $k$ in \eqref{eq:l7:prf:1} as
\begin{align*}
\sums{j,k\ge0}\frac{q^{8\binom{j}{2} + 10\binom{k}{2} + 8jk + (2i+4+4a)j + (2i+5+4a)k}}{\qp[j]{2} \qp[k]{2}}
=\sum_{M\ge 0}\sums{j,k\ge0 \\ j+k=M}\frac{q^{2\binom{k}{2} + k+ 8\binom{M}{2} + (4+2i+4a)M}}{\qp[j]{2} \qp[k]{2}}.
\end{align*}  
By Lemma \ref{AUXLEM} \eqref{eq:ms:fold:2,2:1,0:2,0,0,1,0:1,1} (with $q$ replaced by $-q$), we see
\eqref{eq:l7:prf:1} is reduced to
\begin{align}
\sum_{i,M\ge0}\frac{q^{\binom{i}{2} + i + 2iM + 4M^2+4aM} (-q;q^2)_M}{(q;q)_i (q^2;q^2)_{M}}.\label{eq:l7:prf:2}
\end{align}
With (\eqEulerr), the inner sum on $i$ is rewritten as
\begin{align*}
\sum_{i\ge0} \frac{q^{\binom{i}{2} + (1+2M)i}}{(q;q)_i}
= (-q^{1+2M};q)_\infty
= \frac{(-q;q)_\infty}{(-q;q)_{2M}}
= \frac{1}{(q;q^2)_\infty (-q;q)_{2M}}.
\end{align*}
By \eqref{RRidentities}, we see $\displaystyle\eqref{eq:l7:prf:2}\cdot(q;q^2)_\infty$ is reduced to
\begin{align*}
\sum_{M\ge0}\frac{q^{4M^2+4aM} (-q;q^2)_M}{(q^2;q^2)_{M} (-q;q)_{2M}}
=\sum_{M\ge0}\frac{q^{4M^2+4aM}}{(q^4;q^4)_{M}}
=\frac{1}{(q^{4+4a},q^{16-4a};q^{20})_\infty}.
\end{align*}
This is equal to $(q;q^2)_\infty\cdot\chi_{A^{(2)}_2}((5-4a)\Lambda_0+(1+2a)\Lambda_1)$ and proves Theorem \ref{theo:A22:7:1}.

\begin{Rem}\label{otherslater}
In Slater's list~\cite[(79)=(98),(96)]{Sla}, 
there are identities whose
infinite products matches those in Theorem \ref{theo:A22:7:1} (but we do not need them).
\begin{align*}
\sum_{n\ge0} \frac{q^{n^2}}{(q;q)_{2n}} = \princhar{A^{(2)}_{2}}{5\Lambda_0+\Lambda_1},\quad 
\sum_{n\ge0} \frac{q^{n^2+2n}}{(q;q)_{2n+1}} = \princhar{A^{(2)}_{2}}{\Lambda_0+3\Lambda_1}.
\end{align*}
\end{Rem}

\section{Proof of Theorem \ref{theo:A22:7:2}}\label{pr:A22:l7:2}
Note that the Andrews-Gordon type series is of the form
\begin{align}
\sum_{i,j,k,\ell\ge0}(-1)^{k+(1-a)(j+k)}
\frac{q^{\quadddd{}{2}{2}{8} + \inteeee{}{}{2}{4}{4}{4} + \lineeee{}{(1+a)}{(3+a)}{(6+2a)}}}{\denommmm{}{2}{2}{4}}
\label{eq:l7:2:prf:1}
\end{align}
for $a=0,1$ (swapping $j$ and $k$ from the expression in Theorem \ref{theo:A22:7:2} when $a=0$).
We rewrite the inner sum on $j,k$ and $\ell$ in \eqref{eq:l7:2:prf:1} as
\begin{align*}
{} &{}
\sum_{j,k,\ell\ge0}(-1)^{k+(1-a)(j+k)}
\frac{q^{2\binom{j}{2} + 2\binom{k}{2} + 8\binom{\ell}{2}+ 4(jk +j\ell +k\ell)+ (1+a+i)j + (3+a+i)k + (6+2a+2i)\ell}}{\qp[j]{2} \qp[k]{2} \qp[\ell]{4}} \\
&=\sum_{M\ge0}\sums{j,k,\ell\ge0 \\ j+k+2\ell=M}(-1)^{k+(1-a)(j+k+2\ell)}
\frac{q^{2jk+ 2k + 2\ell+ 2\binom{j+k+2\ell}{2}+ (1+a+i)(j+k+2\ell)}}{\qp[j]{2} \qp[k]{2} \qp[\ell]{4}}
\end{align*}
By Lemma \ref{AUXLEM} \eqref{eq:ms:fold:112:010:0,0,0;1,0,0;0,1,1:112} (with $q$ replaced by $q^2$), we see
\eqref{eq:l7:2:prf:1} is reduced to
\begin{align}
\sum_{i,M\ge0} (-1)^{(1-a)M}\frac{q^{\binom{i}{2} + i + 2\binom{M}{2} + (1+a+i)M} (-q^2;q^2)_{[M/2]}}{(q;q)_i (q^2;q^2)_{[M/2]}}.
\label{eq:l7:2:prf:2}
\end{align}
With (\eqEulerr), the inner sum on $i$ is rewritten as
\begin{align*}
\sum_{i\ge0} 
\frac{q^{\binom{i}{2} + (1+M)i}}{(q;q)_i}
= (-q^{1+M};q)_{\infty}
= \frac{(-q;q)_\infty}{(-q;q)_M}
= \frac{1}{(q;q^2)_\infty (-q;q)_M}.
\end{align*}
Hence,  \eqref{eq:l7:2:prf:2}$\cdot(q;q^2)_\infty$ is equal to
\begin{align}
\sum_{M\ge0} (-1)^{(1-a)M}\frac{q^{2\binom{M}{2} + (1+a)M} (-q^2;q^2)_{[M/2]}}{(-q;q)_M (q^2;q^2)_{[M/2]}}
= \sum_{M\ge0} (-1)^{(1-a)M}\frac{q^{2\binom{M}{2} + (1+a)M}}{(-q;-q)_M}.
\label{eq:l7:2:prf:33}
\end{align}
Further, by Lemma \ref{AUXLEM} \eqref{eq:ms:fold:22:00:0,0,0,0,1:11} (with $q$ replaced by $-q$), we see
\eqref{eq:l7:2:prf:33} is equal to
\begin{align}
\sum_{m,n\ge0} (-1)^{(1-a)(m+n) + n} \frac{q^{2\binom{m+n}{2} + (1+a)(m+n) + n}}{\qp[m]{2} \qp[n]{2}}.
\label{eq:l7:2:prf:3}
\end{align}

Finally, by (\eqEulerr), \eqref{eq:l7:2:prf:3} is reduced to
\begin{align*}
\begin{cases}
\displaystyle\sum_{n\ge0} \frac{q^{2\binom{n}{2} + 2n}}{\qp[n]{2}}\sum_{m\ge0} (-1)^{m}\frac{q^{2\binom{m}{2} + 2mn + m}}{\qp[m]{2}}
= \sum_{n\ge0} \frac{q^{2\binom{n}{2} + 2n}}{\qp[n]{2}}(q^{2n+1};q^2)_\infty & (a=0),\\
\displaystyle\sum_{m\ge0} \frac{q^{2\binom{m}{2} + 2m}}{\qp[m]{2}}\sum_{n\ge0} (-1)^{n}\frac{q^{2\binom{n}{2} + 2mn + 3n}}{\qp[n]{2}} 
= \sum_{m\ge0} \frac{q^{2\binom{m}{2} + 2m}}{\qp[m]{2}}(q^{2m+3};q^2)_\infty & (a=1).
\end{cases}
\end{align*}

In each case, we see $\eqref{eq:l7:2:prf:1}=\eqref{eq:l7:2:prf:3}/(q;q^2)_\infty$ is equal to 
\begin{align*}
\sum_{s\ge0} \frac{q^{2\binom{s}{2} + 2s}}{\qp[s]{2}}\frac{(q^{2s+1+2a};q^2)_\infty}{(q;q^2)_\infty}
=\sum_{s\ge0} \frac{q^{2\binom{s}{2} + 2s}}{\qp[s]{2}(q;q^2)_{s+a}}
=\sum_{s\ge0} \frac{q^{s^2+s}}{(q;q)_{2s+a}}.
\end{align*}
This proves Theorem \ref{theo:A22:7:2} in virtue of Theorem \ref{SLATERTHM}.

\section{Conjectures for level 2 modules of $A_{13}^{(2)}$}\label{conjs}
The level 2 principal characters of $A^{(2)}_{13}$ are 
\begin{align*}
\princhar{A^{(2)}_{13}}{(\delta_{i0}+\delta_{i1})\Lambda_0+\Lambda_i}
= \frac{(q^{16};q^{16})_{\infty}}{(q^2;q^{2})_\infty}\frac{[q^{2i};q^{16}]_\infty}{[q^i;q^{16}]_\infty},
\end{align*}
where $0\leq i\leq 7$ and $\delta$ is the Kronecker delta.

We give conjectural Andrews-Gordon type series for 
$\princhar{A^{(2)}_{13}}{\Lambda_0+\Lambda_1}$ and $\princhar{A^{(2)}_{13}}{\Lambda_{2n+1}}$, where $n=1,2,3$. 
Note that $\princhar{A^{(2)}_{13}}{2\Lambda_0}=\princhar{A^{(2)}_{13}}{2\Lambda_1}$ and $\princhar{A^{(2)}_{13}}{\Lambda_{2n}}$, where $n=1,2,3$,  
are obtained by inflating $q$ to $q^2$ from infinite products with smaller period.

\begin{Conj}\label{conj:A13:3sum:1}
We have $F_{\numANiJusanIchi}(2,2,2) = \princhar{A^{(2)}_{13}}{\Lambda_3}$,
$F_{\numANiJusanIchi}(4,2,6) = \princhar{A^{(2)}_{13}}{\Lambda_5}$ and
$F_{\numANiJusanIchi}(6,4,6) = \princhar{A^{(2)}_{13}}{\Lambda_7}$, where
\begin{align*}
F_{\numANiJusanIchi}(a,b,c):=
\sum_{i,j,k\ge0} (-1)^k \frac{q^{\quaddd{4}{2}{4} + \inteee{2}{4}{4} + \lineee{a}{b}{c}}}{\qp[i]{} \qp[j]{2} \qp[k]{4}}.
\end{align*}
\end{Conj}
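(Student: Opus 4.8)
The plan is to mimic, essentially verbatim, the reduction scheme used in \S\ref{pr:A22:l5} and \S\ref{pr:A22:l7:1}: collapse the triple sum defining $F_{\numANiJusanIchi}(a,b,c)$ to a double sum by folding $j$ (or $j$ and $k$) against one of the summation formulas in Lemma \ref{AUXLEM}, then collapse once more with a second invocation of the lemma, and finally recognize the resulting single sum as one of Slater's identities (or as a known $A^{(2)}_{13}$ level-2 series that has already been proved elsewhere, e.g.\ in \cite{Ku2}). First I would write $N$ for the relevant linear combination $i+j+2k$ or $j+2k$, reorganize the exponent $4\binom i2+2\binom j2+4\binom k2+2ij+4ik+4jk+ai+bj+ck$ into a piece depending only on $N$ (and $i$) plus a "cross" piece in the remaining variables, so that Lemma \ref{AUXLEM} \eqref{eq:ms:fold:112:010:0,0,0;1,0,0;0,1,1:112} (with $q\mapsto q^2$, note the $\qp[j]{2}\qp[k]{4}$ denominator matches after this substitution) applies to the inner sum over $j,k$. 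This is the step that demands care: one must check that the quadratic form, after the substitution $j+2k=N-\text{something}$, has precisely the shape $ij+j+k$ up to the $N$-only terms, and that the signs $(-1)^k$ line up with the $(-1)^j$ appearing in the lemma; here the three specific choices of $(a,b,c)$ are presumably exactly the ones for which this works.

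After the first fold I expect a double sum over $i$ and $N$ of the form $\sum (-1)^{?}\,q^{\binom i2+\text{(lin)}i+\text{(quad in }N)}\,\frac{(-q^2;q^2)_{[N/2]}}{(q;q)_i\,(q^2;q^2)_{[N/2]}}$ (or with $(-q;q)$-type Pochhammers, depending on the $q\mapsto\pm q$ choice). The inner sum over $i$ should then be summable by Euler's identity $(\eqEulerr)$ exactly as in the model proofs: $\sum_i q^{\binom i2+ci}/(q;q)_i=(-q^{1+c};q)_\infty=(-q;q)_\infty/(-q;q)_c$, which produces a factor $1/(q;q^2)_\infty$ times a ratio of Pochhammer symbols. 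Multiplying through by $(q;q^2)_\infty$, the surviving single sum over $N$ collapses — using either Lemma \ref{AUXLEM} \eqref{eq:ms:fold:22:00:0,0,0,0,1:11} or \eqref{eq:ms:fold:2,2:1,0:2,0,0,1,0:1,1}, or a direct split into $N=2m$ and $N=2m+1$ — into a manifestly Rogers--Ramanujan/Andrews--Gordon-like single sum.

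The last step is to match that single sum to the claimed infinite product. Since $\princhar{A^{(2)}_{13}}{\Lambda_{2n+1}}$ for $n=1,2,3$ equals $\frac{(q^{16};q^{16})_\infty}{(q^2;q^2)_\infty}\frac{[q^{4n+2};q^{16}]_\infty}{[q^{2n+1};q^{16}]_\infty}$, and because these products already occur in the literature (they are among the level-2 $A^{(2)}_{13}$ characters, and the $n=1$ case should reduce, after $q\mapsto q^2$-type manipulations, to a product that appears in Slater's list), I would close the argument by citing the relevant known single-sum identity rather than re-deriving it. The main obstacle I anticipate is purely bookkeeping: getting the exponent $4\binom i2+2\binom j2+4\binom k2+2ij+4ik+4jk+ai+bj+ck$ to decompose cleanly after the change of variables, and in particular verifying that the three prescribed $(a,b,c)$ are exactly the ones that make the linear terms in the folded sum land on the values $\{ai, N\text{-linear}\}$ needed for $(\eqEulerr)$ and then for the final Slater match; a secondary nuisance is tracking the parity-dependent signs through the two folds, since unlike Theorem \ref{theo:A22:7:2} there is here no explicit $(-1)^{(1-a)(\cdots)}$ to absorb them, so the $(-1)^k$ must be shown to disappear (or convert to a $q\mapsto -q$ substitution) automatically.
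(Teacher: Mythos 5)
This statement is one the paper itself leaves open: it is stated as a \emph{conjecture}, with no proof given, so there is no argument of the authors to compare yours against --- and your text is in any case a plan rather than a proof (``I expect'', ``presumably exactly the ones for which this works'', ``I would close the argument by citing\dots''), with none of the decisive computations carried out.

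More importantly, the key step you propose does not go through, and the failure is structural, not ``bookkeeping''. Lemma \ref{AUXLEM} \eqref{eq:ms:fold:112:010:0,0,0;1,0,0;0,1,1:112} with $q\mapsto q^2$ is a \emph{three}-variable identity: it needs denominators $(q^2;q^2)_i(q^2;q^2)_j(q^4;q^4)_k$, the constraint $i+j+2k=M$, and an exponent equal to $2ij+2j+2k$ plus a function of $M$ alone. In $F_{\numANiJusanIchi}$ the first denominator is $(q;q)_i$ (base $q$, not $q^2$), and the quadratic form $4\binom{i}{2}+2\binom{j}{2}+4\binom{k}{2}+2ij+4ik+4jk$ cannot be written as $\alpha\binom{M}{2}+2ij+(\text{linear})$ with $M=i+j+2k$: matching the $j^2$-coefficient forces $\alpha=1$ while the $i^2$-coefficient forces $\alpha=2$. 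If instead you fold only the inner $(j,k)$-sum along the natural variable $N=j+2k$ (the one compatible with the cross terms $2ij+4ik=2iN$), the leftover is $2\binom{j}{2}+4\binom{k}{2}+4jk-2\binom{N}{2}=-2k^2$, a \emph{negative} quadratic remainder that no item of Lemma \ref{AUXLEM} can absorb; and no two-variable item of the lemma has the mixed bases $(q^2;q^2)_j(q^4;q^4)_k$ in the first place. This mismatch is independent of the linear parameters $(a,b,c)$, so no choice of them rescues the reduction --- which is presumably exactly why the authors could push this scheme through for Theorems \ref{theo:A22:l5}--\ref{theo:A22:7:2} but only state Conjectures \ref{conj:A13:3sum:1}--\ref{conj:A13:4sum:1}. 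The final step is also unsubstantiated: you would need to name a specific single-sum identity for the modulus-16 products $\princhar{A^{(2)}_{13}}{\Lambda_{2n+1}}$, and none is identified. As it stands, the proposal neither proves the conjecture nor isolates a workable new idea beyond the paper's own (insufficient) toolkit.
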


\begin{Conj}\label{conj:A13:3sum:2}
We have $F_{\numANiJusanNi}(1,3,12) = \princhar{A^{(2)}_{13}}{\Lambda_0+\Lambda_1}$,
$F_{\numANiJusanNi}(1,1,8) = \princhar{A^{(2)}_{13}}{\Lambda_3}$ and
$F_{\numANiJusanNi}(3,3,16) = \princhar{A^{(2)}_{13}}{\Lambda_7}$, where
\begin{align}
\label{eq:A13:3sum:2}
F_{\numANiJusanNi}(a,b,c):=
\sum_{i,j,k\ge0} (-1)^j \frac{q^{\quaddd{}{2}{16} + \inteee{2}{4}{4} + \lineee{a}{b}{c}}}{\qp[i]{} \qp[j]{2} \qp[k]{4}}.
\end{align}
\end{Conj}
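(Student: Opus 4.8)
The plan is to follow the same two steps (S1)--(S2) that were successful for Theorems \ref{theo:A22:l5}--\ref{theo:A22:7:2}. In (S1) one reduces the triple sum $F_{\numANiJusanNi}(a,b,c)$ to a single-fold sum by repeated use of Euler's identities $(\eqEulerr)$, $(\eqbinom)$ together with folding lemmas of the type collected in Lemma \ref{AUXLEM}; in (S2) one recognises the resulting single sum --- equivalently, by the level $2$ principal character formula for $A^{(2)}_{13}$ recalled at the beginning of this section, its product side, which is a modulus $16$ infinite product --- as a Rogers--Ramanujan--Slater type identity, as in Theorem \ref{SLATERTHM}.

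For (S1), observe that among the three variables $i,j,k$ the $k$-variable is the one meant to survive into the single sum: its diagonal exponent $16\binom{k}{2}$ does not match the base $q^4$ of $\qp[k]{4}$, whereas $i$ (diagonal $\binom{i}{2}$, base $q$) and $j$ (diagonal $2\binom{j}{2}$, sign $(-1)^j$, base $q^2$) are set up to be removed by $(\eqEulerr)$. I would first sum over $i$ by $(\eqEulerr)$,
\begin{align*}
\sum_{i\ge0}\frac{q^{\binom{i}{2}+(2j+4k+a)i}}{\qp[i]{}}=(-q^{a};q)_\infty\big/(-q^{a};q)_{2j+4k},
\end{align*}
which leaves a double sum over $j,k$ in which $j$ is coupled to $i+2k$ through the factor $(-q^a;q)_{2j+4k}$ in the denominator. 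Summing this over $j$ is then \emph{not} a bare Euler sum: it requires a folding/summation identity of the same flavour as Lemma \ref{AUXLEM}, but now mixing the three bases $q,q^2,q^4$ and adapted to the coupled denominator. Such an identity is not among the four cases of Lemma \ref{AUXLEM}; I would guess its closed form by computing a few coefficients and then prove it by the $q$-WZ method \cite{Koe}, exactly as Lemma \ref{AUXLEM}\eqref{eq:ms:fold:2,2:1,0:6,0,2,4,0:1,1} is proved. After substituting it and summing out the remaining Euler variable, one should arrive at a single sum of the shape $\sum_{n\ge0} q^{Q(n)}/(q;q)_{2n+\epsilon}$ with $Q$ quadratic and $\epsilon\in\{0,1\}$, whose product is the desired modulus $16$ form.

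For (S2) it then remains to match this single sum with a known modulus $16$ Rogers--Ramanujan--Slater identity (or to prove it directly, e.g.\ from a suitable Bailey pair). The main obstacle is precisely the pair (a)--(b): (a) pinning down and verifying the correct mixed-base folding lemma in (S1), and (b) the fact that --- unlike the level $5$ and $7$ cases, where the reduced sums already occur on Slater's list (Theorem \ref{SLATERTHM}) --- the modulus $16$ single sum produced here need not be classical and may itself require a fresh proof; this is why the statement is left as a conjecture. As partial evidence one can verify the three identities as power series in $q$ to high order; and, since Conjecture \ref{conj:A13:3sum:1} and Conjecture \ref{conj:A13:3sum:2} together force the purely $q$-hypergeometric identity $F_{\numANiJusanIchi}(2,2,2)=F_{\numANiJusanNi}(1,1,8)$ (both claimed equal to $\princhar{A^{(2)}_{13}}{\Lambda_3}$), establishing that identity directly between the two triple sums would be a natural intermediate target.
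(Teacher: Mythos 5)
There is no proof to compare against here: the statement you are addressing is left as a \emph{conjecture} in the paper, and your text, by your own admission, does not close it either. What you give is a plausible strategy, not an argument. Concretely, the two load-bearing steps are missing: (a) the ``mixed-base folding/summation identity'' that would let you carry out the sum over $j$ after the Euler summation over $i$ is never written down --- ``guess its closed form and prove it by $q$-WZ'' is a research plan, not a lemma --- and there is no evidence that such an identity of the required shape exists (the four cases of Lemma \ref{AUXLEM} were engineered so that, after folding, the surviving variable enters only through factors like $(q;q)_M$, $(q;q^2)_M$ or $(-q;q)_{[M/2]}$, and your coupled denominator $(-q^{a};q)_{2j+4k}$ has no analogous treatment); and (b) even granting (a), the claim that one lands on a single sum $\sum_{n\ge0}q^{Q(n)}/(q;q)_{2n+\epsilon}$ whose product is the modulus $16$ character is unsubstantiated, and, unlike the level $5$ and $7$ cases which terminate in Slater's list (Theorem \ref{SLATERTHM}), no modulus $16$ identity of this type is identified. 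So the honest verdict is that your proposal reproduces the paper's general philosophy (S1)--(S2) but establishes nothing beyond what the paper already concedes by labelling the statement a conjecture.

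Two smaller points. Your one actual computation, the Euler summation over $i$ giving $(-q^{a};q)_\infty/(-q^{a};q)_{2j+4k}$, is correct, and the suggestion to target $F_{\numANiJusanIchi}(2,2,2)=F_{\numANiJusanNi}(1,1,8)$ as a purely $q$-hypergeometric consequence of Conjectures \ref{conj:A13:3sum:1} and \ref{conj:A13:3sum:2} is a reasonable (unclaimed) intermediate goal. But note that the only rigorous statement the paper proves in the vicinity of this conjecture is the identity $F_{\numANiJusanNi}(a,b,c)=F_{\numANiJusanSan}(a,2a+1,b,c)$, obtained not by summing $i$ away via ($\eqEulerr$) as you do, but by \emph{splitting} $i=s+2t$ with the folding identity $\sum_{i+2j=M}q^{\binom{i}{2}}/(\qp[i]{}\qp[j]{2})=1/\qp[M]{}$; this relates the triple and quadruple sums without reducing either to a single sum, and is independent of the truth of the conjecture. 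If you want partial progress of the kind the paper actually records, that reduction (or a numerical verification to high order in $q$) is the right register; a full proof along your (S1)--(S2) outline would require supplying the missing lemma and the missing modulus $16$ identity, neither of which is currently in sight.
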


\begin{Conj}\label{conj:A13:4sum:1}
We have $F_{\numANiJusanSan}(1,5,1,12) = \princhar{A^{(2)}_{13}}{\Lambda_5}$, where
\begin{align*}
F_{\numANiJusanSan}(a,b,c,d):=
\sum_{i,j,k,\ell\ge0} (-1)^k \frac{q^{\quadddd{2}{4}{2}{16} + \inteeee{2}{2}{4}{4}{8}{4} + \lineeee{a}{b}{c}{d}}}{\qp[i]{} \qp[j]{2} \qp[k]{2} \qp[\l]{4}}.
\end{align*}
\end{Conj}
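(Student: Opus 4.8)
The plan is to run the two-step program (S1)--(S2) of \S\ref{mainres} for the quadruple sum $F_{3}(1,5,1,12)$ of Conjecture \ref{conj:A13:4sum:1}, exactly as in the proofs of Theorems \ref{theo:A22:l5}--\ref{theo:A22:7:2}. For (S1), collect the $i$-dependent part of the exponent: it equals $2\binom{i}{2}+2ij+2ik+4i\ell+i=i^{2}+2i(j+k+2\ell)$, so $i$ couples to $(j,k,\ell)$ only through $M:=j+k+2\ell$, which is the analogue of the structural fact exploited in \S\ref{pr:A22:l7:2}. First I would substitute $j+k+2\ell=M$ and evaluate the inner sum over $(j,k,\ell)$. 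Its denominators $(q^{2};q^{2})_{j}(q^{2};q^{2})_{k}(q^{4};q^{4})_{\ell}$ are the ``$q$ replaced by $q^{2}$'' versions of those in Lemma \ref{AUXLEM} \eqref{eq:ms:fold:112:010:0,0,0;1,0,0;0,1,1:112}, but with larger cross-coefficients, so one needs a new folding identity of the same flavour; I would find it as in the proof of Lemma \ref{AUXLEM} \eqref{eq:ms:fold:2,2:1,0:6,0,2,4,0:1,1}, i.e.\ guess the right-hand side (presumably a quotient of $q^{2}$- and $q^{4}$-Pochhammer symbols, with a $(-q^{2};q^{2})_{[M/2]}$ factor and a sign) and prove it by Euler's identities (\eqEuler), (\eqEulerr), the $q$-binomial theorem (\eqbinom) and the $q$-WZ method. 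What remains is a double sum over $i$ and $M$ whose $i$-part is $\sum_{i\ge0}q^{i^{2}+2iM}/(q;q)_{i}$, which I would try to collapse to a single sum by a further change of variables (note $i^{2}+2iM=(i+M)^{2}-M^{2}$), by summing in the other order, and by reusing (\eqEulerr) and Lemma \ref{AUXLEM} \eqref{eq:ms:fold:22:00:0,0,0,0,1:11} (with $q$ replaced by $-q$) as in \S\ref{pr:A22:l7:2}.

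For (S2), the target product is
\[
\princhar{A^{(2)}_{13}}{\Lambda_{5}}=\frac{(q^{16};q^{16})_{\infty}}{(q^{2};q^{2})_{\infty}}\,\frac{[q^{10};q^{16}]_{\infty}}{[q^{5};q^{16}]_{\infty}}=\frac{1}{(q^{2},q^{4},q^{5},q^{8},q^{11},q^{12},q^{14};q^{16})_{\infty}},
\]
so after stripping off the ``extra'' Euler factor produced by the reduction (something like $(q;q^{2})_{\infty}$, $(q^{2};q^{2})_{\infty}$ or $(q^{4};q^{4})_{\infty}$, just as $(q;q^{2})_{\infty}$ recurs throughout \S\ref{pr:A22:l5}--\S\ref{pr:A22:l7:2}) the residual single sum should match one of the modulus-$16$ Rogers--Ramanujan--Slater identities in \cite{Sla}; one then reads off the entry, as we did with Theorem \ref{SLATERTHM}.

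The main obstacle I anticipate is precisely the double sum left over at the end of (S1). Unlike in Theorems \ref{theo:A22:l5}--\ref{theo:A22:7:2}, the variable $i$ here carries a genuine quadratic self-term $q^{i^{2}}$ against the base-$q$ denominator $(q;q)_{i}$, so $\sum_{i\ge0}q^{i^{2}+2iM}/(q;q)_{i}$ is \emph{not} an instance of (\eqEuler) or (\eqEulerr) and has no closed form on its own; making the two remaining summations cooperate --- reconciling the sign $(-1)^{k}$ with whatever sign the new folding lemma contributes, and matching parities --- is the delicate point, and it may well force one to establish a genuinely new single- or double-sum identity (provable, if at all, only via a Bailey-pair/Bailey-chain argument or the $q$-WZ method, rather than an off-the-shelf one). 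This is presumably why the statement is still only conjectural; a lesser difficulty along the way is guessing the correct new folding identity in (S1).
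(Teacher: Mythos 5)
The statement you are addressing is a \emph{conjecture} in the paper: no proof of Conjecture \ref{conj:A13:4sum:1} is given (it is supported only by numerical verification), so there is no argument of the authors to compare yours against --- and your proposal, as you yourself say, is not a proof either but a program. Within that program your structural analysis is accurate: for $a=1$ the $i$-part of the exponent is $2\binom{i}{2}+i+2i(j+k+2\ell)=i^{2}+2iM$ with $M=j+k+2\ell$, and the leftover $i$-sum $\sum_{i\ge0}q^{i^{2}+2iM}/(q;q)_{i}$ carries a genuine quadratic self-term, so it cannot be collapsed by (\eqEuler) or (\eqEulerr) the way every $i$-sum is collapsed in \S\ref{pr:A22:l5}--\S\ref{pr:A22:l7:2}; moreover the fold over $j+k+2\ell=M$ is not an instance of Lemma \ref{AUXLEM} \eqref{eq:ms:fold:112:010:0,0,0;1,0,0;0,1,1:112} even after $q\mapsto q^{2}$, since residual $\binom{j}{2}$-, $\binom{\ell}{2}$- and $j\ell$-terms survive, so the ``new folding identity'' your step (S1) requires is left unstated and unproved. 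Your evaluation of the target product, $\princhar{A^{(2)}_{13}}{\Lambda_5}=1/(q^{2},q^{4},q^{5},q^{8},q^{11},q^{12},q^{14};q^{16})_{\infty}$, is correct, but the hope that the reduced single sum will land on a modulus-16 entry of \cite{Sla} is not substantiated by any cited entry. Note also the remark following the conjecture: $F_{\numANiJusanNi}(a,b,c)=F_{\numANiJusanSan}(a,2a+1,b,c)$, so quadruple sums with $b=2a+1$ trade for triple sums; here $(a,b)=(1,5)$ has $b\neq 2a+1$, which is precisely why $\princhar{A^{(2)}_{13}}{\Lambda_5}$ is the ``missing'' case and why it is stated only conjecturally.

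So, judged as a proof, the gap is the entire analytic content: neither the folding identity of (S1) nor the terminal single- or double-sum identity of (S2) is produced, and no Bailey-pair or $q$-WZ argument is actually carried out. Your diagnosis of \emph{where} the method of Theorems \ref{theo:A22:l5}--\ref{theo:A22:7:2} breaks down is correct and consistent with the paper's decision to leave the statement open, but nothing in the proposal moves the statement beyond its conjectural status.
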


\begin{Rem}
One can prove 
$F_{\numANiJusanNi}(a,b,c) = F_{\numANiJusanSan}(a,2a+1,b,c)$ for $a,b,c\geq 0$
by rewriting the inner sum on $i$ in \eqref{eq:A13:3sum:2} as
\begin{align*}
\sum_{i\ge0} \frac{q^{\binom{i}{2} + (2j+4k+a)i}}{\qp[i]{}}
&= \sum_{i\ge0} \sums{s,t\ge0 \\ s+2t=i}\frac{q^{\binom{s+2t}{2} + (2j+4k+a)(s+2t) + \binom{s}{2}}}{\qp[s]{} \qp[t]{2}}\\
&= \sum_{s,t\ge0} \frac{q^{2\binom{s}{2} + 4\binom{t}{2} + 2st + (2j+4k)(s+2t) + as + (2a+1)t}}{\qp[s]{} \qp[t]{2}}.        
\end{align*}
Here, the first equality follows from
$\displaystyle\sum_{\substack{i,j\geq 0 \\ i+2j=M}} \frac{q^{\binom{i}{2}}}{\qp[i]{} \qp[j]{2}} = \frac{1}{\qp[M]{}}$ for $M\geq0$,
which is proved similarly to Lemma \ref{AUXLEM} \eqref{eq:ms:fold:22:00:0,0,0,0,1:11} (using (\eqEuler) and (\eqEulerr)).
\end{Rem}

Hence, if Conjecture \ref{conj:A13:3sum:2} is true, we have
$F_{\numANiJusanSan}(1,3,3,12) = \princhar{A^{(2)}_{13}}{\Lambda_0+\Lambda_1},$
$F_{\numANiJusanSan}(1,3,1,8) = \princhar{A^{(2)}_{13}}{\Lambda_3}$,
$F_{\numANiJusanSan}(3,7,3,16) = \princhar{A^{(2)}_{13}}{\Lambda_7}$
and Conjecture \ref{conj:A13:4sum:1} gives the ``missing'' case.

\begin{Rem}\label{recstrr}
The double sums \eqref{eq:AG:234:14} and \eqref{eq:AG:146:14} coincide with
those obtained by taking the ``$k=0$ part'' of the triple sums 
$F_\numANiJusanNi(1,1,8)$ and $F_\numANiJusanNi(1,3,12)$ in Conjecture \ref{conj:A13:3sum:2}.
\end{Rem}


\section{Notes on Capparelli's identities}\label{Capsum}
We fix the conditions \textsf{(C1)} and \textsf{(C2)} on a partition $\lambda=(\lambda_1,\cdots,\lambda_{\ell})$
to recall Capparelli's partition theorems (Theorem \ref{CAPTHM}). See also \S\ref{Atwotwo}. 
\begin{enumerate}
\item[\textsf{(C1)}] $1\leq\forall j\leq \ell-1$, $\lambda_j-\lambda_{j+1}\geq 2$, 
\item[\textsf{(C2)}] $1\leq\forall j\leq \ell-1$, $\lambda_j-\lambda_{j+1}\leq 3\implies \lambda_j+\lambda_{j+1}\equiv 0 \pmod 3$
\end{enumerate}

\begin{Thm}[{\cite{An4,Cap1,TX}}]\label{CAPTHM}
Let $a=1,2$.
For any $n\geq 0$, partitions $\lambda$ of $n$ with
condtion $C_a$ are equinumerous to those with condition $D_a$, where
\begin{enumerate}
\item[$C_a$:] \textup{\textsf{(C1), (C2)}} and $1\leq\forall j\leq \ell,\lambda_{i}\neq a$,
\item[$D_a$:] $1\leq\forall j\leq \ell,\lambda_j\not\equiv \pm a\pmod{6}$, and $\lambda_1,\dots,\lambda_{\ell(\lambda)}$ are distinct.
\end{enumerate}
\end{Thm}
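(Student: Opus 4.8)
The plan is to reduce the equinumerosity, in the standard way, to a generating function identity and then to evaluate the series involved. For each $n$ both counts are nonnegative integers, so it suffices to prove, for $a=1,2$, that $G_a(q)=\prod_{n\not\equiv\pm a\,(\mathrm{mod}\ 6)}(1+q^n)$, where $G_a(q):=\sum_\lambda q^{|\lambda|}$ is summed over partitions $\lambda$ obeying \textsf{(C1)}, \textsf{(C2)} and having no part equal to $a$; indeed the generating function of the $D_a$-side is visibly that product, the $D_a$-partitions being exactly the partitions into distinct parts that avoid the residues $\pm a$ modulo $6$. That product simplifies to $(-q^2;q^2)_\infty(-q^3;q^6)_\infty$ when $a=1$ and to $(-q;q^2)_\infty(-q^6;q^6)_\infty$ when $a=2$, so the whole content is the evaluation of $G_a(q)$.

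To evaluate $G_a(q)$ I would attach to the $C_a$-side a refined generating function amenable to a $q$-difference analysis. The feature of \textsf{(C2)} that must be recorded is that a consecutive pair $\lambda_j>\lambda_{j+1}$ is constrained by $\lambda_{j+1}\equiv-\lambda_j\pmod 3$ exactly when the gap $\lambda_j-\lambda_{j+1}$ lies in $\{2,3\}$, and is unconstrained when the gap is $\geq 4$. Accordingly I would introduce, for $r\in\{0,1,2\}$, the series $H_a^{(r)}(x;q)=\sum x^{\lambda_1}q^{|\lambda|}$ over those $C_a$-partitions whose largest part is $\equiv r\pmod 3$ (together with the empty partition), the residual information modulo $6$ being recovered by a further split of the parts according to parity. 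Peeling off the largest part and sorting the admissible new largest parts into the two regimes above yields a finite linear system of $q$-difference equations for the $H_a^{(r)}$, in which the short-gap contributions (which carry the mod-$3$ restriction) and the long-gap contributions enter with two distinct $x$-shifts and with coefficients that are monomials in $x$ and $q$ encoding the forbidden value $a$.

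Finally I would solve this system and match the product. Iterating the recursions (equivalently, reducing the coefficient matrix over the ring of $q$-shift operators) and then putting $x=1$ should reduce $G_a(q)$ to a finite combination of theta series $\sum_{m\in\Z}(\pm1)^m q^{\alpha m^2+\beta m}$; the Jacobi triple product identity then turns each summand into an infinite product, and collecting the pieces should recover the product of the first paragraph. I expect this last step to be the main obstacle: the three residue classes are genuinely coupled through \textsf{(C2)}, and since the short-gap and long-gap contributions enter with incommensurable $x$-shifts the system admits no evident decoupling, so reducing it to closed form needs real computation --- this is where $q$-trinomial coefficients are brought into play in one of the cited proofs~\cite{An4}. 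A cleaner alternative to keep in reserve is the method of weighted words (in the spirit of the Alladi--Andrews--Gordon treatment of Capparelli's conjecture): colour the positive integers by their residue modulo $3$, realise $C_a$-partitions as partitions into distinct coloured integers subject to a suitable total order on coloured letters, prove the colour-refined product identity --- which, after a dilation $q\mapsto q^3$ together with an assignment of the three colours to appropriate powers of $q$, descends at once to $G_a$ and to the $D_a$-product --- and note that at the coloured level the identity is a routine Durfee-type $q$-series manipulation; there the only delicate point is choosing the order on coloured letters compatibly with the omitted part $a$.
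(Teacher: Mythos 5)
Your reduction to the generating-function identity $G_a(q)=\prod_{n\not\equiv\pm a\ (\mathrm{mod}\ 6)}(1+q^n)$ and the product simplifications are fine, but everything after that is a plan rather than a proof, and the plan stops exactly at the decisive step. In your primary route you set up a coupled system of $q$-difference equations for the series $H^{(r)}_a(x;q)$ refined by the largest part, and then you yourself concede that the system ``admits no evident decoupling,'' that closing it ``needs real computation,'' and that this is where the $q$-trinomial machinery of \cite{An4} enters; no reduction to theta series is actually carried out, and there is no evidence the iteration terminates in the form you hope. The fallback via weighted words is likewise only invoked: the colour-refined identity behind the Alladi--Andrews--Gordon treatment of Capparelli's theorem is a substantial result, not ``a routine Durfee-type $q$-series manipulation,'' and you neither state it precisely nor prove it, nor verify that the dilation and ordering of coloured letters reproduce conditions \textsf{(C1)}, \textsf{(C2)} with the part $a$ omitted. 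So the sum side is never evaluated in either route; what remains is an outline that defers the core identity to the cited literature.

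For comparison, the argument given in this paper is complete and hinges on a different choice of statistic: one weights by the \emph{number of parts}, $F(x)=f_a(x,q)=\sum_{\lambda\in\mathcal{C}_a}x^{\ell(\lambda)}q^{|\lambda|}$, and uses the linked-partition-ideal structure (\cite[\S8]{An1}) to derive the single equation \eqref{eq:qd:F}. The substitutions $G(x)=F(x)/(x;q^3)_\infty$ and $h_M=g_M/(-q^3;q^3)_M$ then collapse \eqref{eq:qd:F} to the first-order equation $(1-x)(1-xq^3)H(x)=(1+xq^{3-a})(1+xq^{3+a})H(xq^6)$, which has the explicit solution $H(x)=(-xq^{3-a},-xq^{3+a};q^6)_\infty/(x;q^3)_\infty$; extracting coefficients and letting $x\to1$ via Appell's Comparison Theorem \cite{Die} gives $F(1)=(-q^3;q^3)_\infty(-q^{3-a},-q^{3+a};q^6)_\infty$, which is precisely the $D_a$ product. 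The lesson is that tracking $\ell(\lambda)$ (rather than $\lambda_1$) is what makes the functional equation solvable in closed form; your largest-part refinement recreates the coupled system that historically forced the heavy computations you are hoping to avoid, so as written the proposal has a genuine gap.
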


In \cite[Theorem 10, Theorem 11]{Ku2}, Kur\c{s}ung\"oz showed
\begin{align}
f_{1}(x,q) &= \sum_{i,j\ge0} \frac{q^{\quadd{4}{12}+\intee{6}+\linee{2}{6}}}{\qp[i]{} \qp[j]{3}} x^{i+2j}\label{eq:Cap_2sum1}, \\
f_{2}(x,q) &= \sum_{i,j\ge0} \frac{q^{\quadd{4}{12}+\intee{6}+\linee{3}{9}}}{\qp[i]{} \qp[j]{3}} x^{i+2j} (1+xq^{1+2i+3j})\label{eq:Cap_2sum2},
\end{align}
where $\mathcal{C}_{a}$ denote the set of partitions with the condition $C_a$ 
and $f_{a}(x,q) := \sum_{\lambda\in\mathcal{C}_a} x^{\ell(\lambda)} q^{|\lambda|}$
for $a=1,2$. 
Combining Theorem \ref{CAPTHM} and
\eqref{eq:Cap_2sum1},\eqref{eq:Cap_2sum2} with $x=1$, Kur\c{s}ung\"oz got the following identities.

\begin{Thm}[{\cite[Corollary 18]{Ku2}}]\label{KUres}
Concerning the
level 3 modules of $A^{(2)}_2$, we have
\begin{align*}
\sum_{i,j\geq 0}\frac{q^{2i^2+6ij+6j^2}}{(q;q)_i(q^3;q^3)_j}
&=(-q^2,-q^3,-q^4,-q^6;q^6)_{\infty}\bigg(=\frac{1}{[q^2,q^3;q^{12}]_\infty}=\chi_{A^{(2)}_2}(3\Lambda_0)\bigg), \\
\sum_{i,j\geq 0}\frac{q^{2i^2+6ij+6j^2+i+3j}(1+q^{2i+3j+1})}{(q;q)_i(q^3;q^3)_j}
&=(-q,-q^3,-q^5,-q^6;q^6)_{\infty}\\\bigg(&=\frac{[q^2;q^{12}]_\infty}{[q,q^3,q^5;q^{12}]_\infty}=\chi_{A^{(2)}_2}(\Lambda_0+\Lambda_1)\bigg).
\end{align*}
\end{Thm}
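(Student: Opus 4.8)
The plan is to derive both identities as the $x=1$ specialisation of Kur\c{s}ung\"oz's generating-function identities \eqref{eq:Cap_2sum1} and \eqref{eq:Cap_2sum2}, combined with Capparelli's partition theorem (Theorem \ref{CAPTHM}). First I would put $x=1$ on both sides of \eqref{eq:Cap_2sum1}: the exponent $4\binom{i}{2}+12\binom{j}{2}+6ij+2i+6j$ collapses to $2i^2+6ij+6j^2$, so the left side becomes exactly the first multisum of Theorem \ref{KUres}, while the right side becomes $f_1(1,q)=\sum_{\lambda\in\mathcal{C}_1}q^{|\lambda|}$. The same substitution in \eqref{eq:Cap_2sum2} turns its left side into the second multisum of Theorem \ref{KUres} (with the factor $(1+xq^{1+2i+3j})$ becoming $(1+q^{2i+3j+1})$), and its right side into $f_2(1,q)=\sum_{\lambda\in\mathcal{C}_2}q^{|\lambda|}$.

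Next I would invoke Theorem \ref{CAPTHM} for $a=1,2$ to replace $f_a(1,q)$ by the generating function for partitions with condition $D_a$, i.e.\ partitions into \emph{distinct} parts none of which is $\equiv\pm a\pmod 6$. Such a generating function is manifestly the product $\prod_{n\ge 1,\ n\not\equiv\pm a\ (6)}(1+q^n)$. For $a=1$ the admissible residues mod $6$ are $2,3,4,0$, giving $(-q^2,-q^3,-q^4,-q^6;q^6)_\infty$; for $a=2$ they are $1,3,5,0$, giving $(-q,-q^3,-q^5,-q^6;q^6)_\infty$. This establishes the first equality on each line of Theorem \ref{KUres}.

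It then remains to record the remaining product rewritings. Applying the elementary Euler identity $\prod_{k\ge 0}(1+q^{a+bk})=(q^{2a};q^{2b})_\infty/(q^a;q^b)_\infty$ to each factor and regrouping the resulting $q^{12}$-Pochhammer symbols by residue modulo $12$ yields $(-q^2,-q^3,-q^4,-q^6;q^6)_\infty=1/(q^2,q^3,q^9,q^{10};q^{12})_\infty=1/[q^2,q^3;q^{12}]_\infty$ and $(-q,-q^3,-q^5,-q^6;q^6)_\infty=(q^2,q^{10};q^{12})_\infty/(q;q^2)_\infty=[q^2;q^{12}]_\infty/[q,q^3,q^5;q^{12}]_\infty$. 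The identifications of these products with $\princhar{A^{(2)}_2}{3\Lambda_0}$ and $\princhar{A^{(2)}_2}{\Lambda_0+\Lambda_1}$ are then Lepowsky's numerator formula (see \S\ref{RRintro} and \cite{Bos,Cap0}).

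Since the genuine content is imported here---Capparelli's theorem and the identities \eqref{eq:Cap_2sum1}, \eqref{eq:Cap_2sum2} of \cite{Ku2}---there is no real analytic obstacle; the theorem is essentially a corollary of those inputs. The only points demanding care are purely bookkeeping: checking that the quadratic exponents collapse correctly when $x=1$, enumerating the forbidden residue classes modulo $6$ without slipping a sign, and pairing each resulting infinite product with the correct level-$3$ standard module of $A^{(2)}_2$ in the final step.
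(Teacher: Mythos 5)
Your argument is correct and is exactly the route the paper indicates: Theorem \ref{KUres} is quoted from \cite[Corollary 18]{Ku2}, and the paper's own justification is precisely ``combine Theorem \ref{CAPTHM} with \eqref{eq:Cap_2sum1}, \eqref{eq:Cap_2sum2} at $x=1$,'' which is what you do, with the exponent collapse, the residue classes modulo $6$, and the product rewritings all checking out. Nothing further is needed.
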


Note that the left hand side of the latter is not
an Andrews-Gordon type series in our sense (see
\S\ref{mainres}). The purpose of this section 
is to prove 
\begin{align*}
\sum_{i,j,k\ge0}\frac{q^{\quaddd{5}{5}{12}+\inteee{3}{6}{6}+\lineee{(3-a)}{(2+a)}{6}}}{\denommm{2}{2}{3}} 
=\chi_{A^{(2)}_2}((5-2a)\Lambda_0+(a-1)\Lambda_1),
\end{align*}
for $a=1,2$. This follows from substituting $x=1$ to Theorem \ref{KUresss}.

\begin{Thm}\label{KUresss} For $a=1,2$, we have
\begin{align*}
f_{a}(x,q) = \sum_{i,j,k\ge0}\frac{q^{\quaddd{5}{5}{12}+\inteee{3}{6}{6}+\lineee{(3-a)}{(2+a)}{6}}}{\denommm{2}{2}{3}} x^{i+j+2k}.
\end{align*}
\end{Thm}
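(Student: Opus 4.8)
\textbf{Proof proposal for Theorem \ref{KUresss}.}

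The plan is to establish the claimed $q$-series formula for $f_a(x,q)$ by exhibiting a recursion satisfied by both sides and checking enough initial conditions. On the combinatorial side, we already know from \eqref{eq:Cap_2sum1} and \eqref{eq:Cap_2sum2} that $f_a(x,q)$ equals a double sum; so the cleanest route is to prove the purely $q$-series identity that the triple sum on the right of Theorem \ref{KUresss} equals the double sum on the right of \eqref{eq:Cap_2sum1} (when $a=1$) or \eqref{eq:Cap_2sum2} (when $a=2$). That is, I would reduce Theorem \ref{KUresss} to showing
\begin{align*}
\sum_{i,j,k\ge0}\frac{q^{\quaddd{5}{5}{12}+\inteee{3}{6}{6}+\lineee{(3-a)}{(2+a)}{6}}}{\denommm{2}{2}{3}} x^{i+j+2k}
= \sum_{i,j\ge0} \frac{q^{\quadd{4}{12}+\intee{6}+\linee{(4-a)}{(3+3a-3)}{}\cdots}}{\qp[i]{} \qp[j]{3}} x^{i+2j}\bigl(1+\delta_{a,2}\,xq^{1+2i+3j}\bigr),
\end{align*}
with the linear exponents matched to \eqref{eq:Cap_2sum1}, \eqref{eq:Cap_2sum2}; the factor $1+xq^{1+2i+3j}$ is present only for $a=2$.

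The mechanism for this reduction should mirror the technique used in \S\ref{pr:A22:l5}--\S\ref{pr:A22:l7:2}: collapse two of the three summation indices. Concretely, in the triple sum I would freeze $k$ and the ``outer'' index, and perform the inner sum over the remaining pair by folding along a new index $M$ (setting, say, $i+j=M$ or an analogous linear combination dictated by the quadratic form $\quaddd{5}{5}{12}+\inteee{3}{6}{6}$), then invoke the appropriate case of Lemma \ref{AUXLEM}. Looking at the denominators $\qp[i]{2}\qp[j]{2}\qp[k]{3}$ and the cross term $3ij$ in the numerator, the relevant input is Lemma \ref{AUXLEM}\eqref{eq:ms:fold:112:010:0,0,0;1,0,0;0,1,1:112} (after replacing $q$ by a suitable power, here $q^2$ or $q^3$), which converts a sum with three moduli into a ratio involving $(-q;q)_{[M/2]}$; applied here it should transform the $(q^2;q^2)_i(q^2;q^2)_j$ pair into a single $(q^3;q^3)$-type factor, producing exactly the denominator $\qp[i']{}\qp[j']{3}$ of Kur\c{s}ung\"oz's double sum. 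The $a=2$ case will additionally need the $(1+xq^{1+2i+3j})$ factor to emerge; I expect this to come out of the fold automatically, just as the factor $(1+x)$ appears on the right side of Lemma \ref{AUXLEM}\eqref{eq:ms:fold:112:010:0,0,0;1,0,0;0,1,1:112} from the two residues of $M$ modulo $2$.

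Once the triple sum is reduced to the double sum of \eqref{eq:Cap_2sum1} or \eqref{eq:Cap_2sum2}, Theorem \ref{KUresss} follows immediately, since those double sums are \emph{equal} to $f_a(x,q)$ by Kur\c{s}ung\"oz's result \cite[Theorem 10, Theorem 11]{Ku2}. Setting $x=1$ and using Theorem \ref{CAPTHM} (together with Lepowsky's numerator formula to identify the product with $\chi_{A^{(2)}_2}((5-2a)\Lambda_0+(a-1)\Lambda_1)$) then yields the displayed corollary preceding the theorem. The main obstacle I anticipate is purely bookkeeping: one must choose the folding variable and the exponent substitutions so that all three pieces of the quadratic form, the bilinear terms $3ij+6ik+6jk$, and the linear terms $(3-a)i+(2+a)j+6k$ line up \emph{simultaneously} with the target double sum — a single wrong coefficient derails everything — and one must double-check the edge behavior $1/(q;q)_n=0$ for $n<0$ as in the proof of Lemma \ref{AUXLEM}\eqref{eq:ms:fold:2,2:1,0:6,0,2,4,0:1,1}. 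A secondary subtlety is tracking the sign/parity factor: unlike the level $5$ and $7$ proofs there is no $(-1)^k$ here on the combinatorial side, so one must verify that the fold does not introduce spurious signs (equivalently, that the relevant specialization of Lemma \ref{AUXLEM} is applied with $q$, not $-q$).
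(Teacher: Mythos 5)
Your overall skeleton---show that the triple sum equals Kur\c{s}ung\"oz's double sums \eqref{eq:Cap_2sum1}, \eqref{eq:Cap_2sum2} and then quote \cite[Theorem 10, Theorem 11]{Ku2}---is logically admissible, but the step that carries all the weight is missing, and the lemma you point to cannot supply it. Matching powers of $x$ forces the fold $i+j=M$ of the two base-$q^2$ indices against the single index of the double sum, with $k$ matched to the $(q^3;q^3)$-index (which already has the right base, so nothing needs to become ``$(q^3;q^3)$-type''); since $5\binom{i}{2}+5\binom{j}{2}+3ij=5\binom{M}{2}-2ij$ on $i+j=M$, what you need already in the case $a=1$ is the finite identity
\[
\sums{i,j\ge0 \\ i+j=M}\frac{q^{\binom{i}{2}+\binom{j}{2}-ij+j}}{\qp[i]{2}\qp[j]{2}}\;=\;\frac{1}{\qp[M]{}},
\]
a two-index fold whose exponent retains diagonal quadratic terms and a \emph{negative} cross term and whose output is a base-$q$ factor $1/\qp[M]{}$. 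Lemma \ref{AUXLEM}\eqref{eq:ms:fold:112:010:0,0,0;1,0,0;0,1,1:112} is not this: it has three indices with moduli $(1,1,2)$ (hence $(2,2,4)$ or $(3,3,6)$ after $q\mapsto q^2,q^3$, never producing $\qp[M]{}$), it carries a sign $(-1)^j$ that has no counterpart here, and its folded exponent $ij+j+k$ has no $\binom{i}{2},\binom{j}{2}$ part; no substitution converts it into the display above. Nor does the display follow from (\eqEuler)--(\eqbinom) by the generating-function factorization used for items \eqref{eq:ms:fold:22:00:0,0,0,0,1:11}--\eqref{eq:ms:fold:112:010:0,0,0;1,0,0;0,1,1:112} of Lemma \ref{AUXLEM}, because the $-ij$ term prevents $\sum_M x^M(\cdots)$ from splitting into single sums. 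It does appear to be true (small cases check, and a $q$-WZ proof as in Lemma \ref{AUXLEM}\eqref{eq:ms:fold:2,2:1,0:6,0,2,4,0:1,1} should work), but it is nowhere established in your argument. The case $a=2$ is worse still: the target \eqref{eq:Cap_2sum2} carries the extra factor $(1+xq^{1+2i+3j})$, so you must match a two-term sum side, and ``I expect this to come out of the fold automatically'' is not an argument.

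For comparison, the paper's proof takes a completely different route and never uses the double sums: it derives the $q$-difference equation \eqref{eq:qd:F} from the linked-partition-ideal structure of \textsf{(C1)}, \textsf{(C2)}, passes to $G(x)=\sum_M q^{-3\binom{M}{2}}f_Mx^M$, solves the resulting first-order functional equation to obtain the product \eqref{eq:sol:G}, expands that product by (\eqEuler) and (\eqEulerr), and restores the factor $q^{3\binom{M}{2}}$ to land exactly on the triple sum. If you want to salvage your reduction, note that your missing fold identity is precisely the coefficient-wise statement that the two factorizations $(-xq^{3-a},-xq^{2+a};q^2)_\infty$ versus (for $a=1$) $(-xq^{2};q)_\infty$ of the same function $G(x)$ produce the triple and double sums respectively---which is how the paper, in the Remark following its proof, recovers \eqref{eq:Cap_2sum1} and a variant of \eqref{eq:Cap_2sum2}; in other words, repairing your proof essentially pushes you back to the paper's argument.
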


\begin{proof}
Since the set of partitions with the conditions \textsf{(C1)},\textsf{(C2)} is a 
\emph{linked partition ideal} (see ~\cite[\S8]{An1}),
one can derive a $q$-difference equation algorithmically
\begin{align}\label{eq:qd:F}
F(x) = (1+xq^3)F(xq^3) + x(q^{3-a} + q^{3+a} + xq^6)F(xq^6) + x^2q^9(1-xq^6)F(xq^9),
\end{align}
where $F(x):=f_{a}(x,q)$. 
Putting $F(x)=:\sum_{M\ge0}f_M x^M$, by \eqref{eq:qd:F} we have
\begin{align*} 
  (1-q^{3M})f_{M}
  = q^{3M-3} (q^{3M+a} + q^{3M-a} + q^3) f_{M-1}
  + q^{6M-6} (1 + q^{3M-3}) f_{M-2}
  - q^{9M-12} f_{M-3}
\end{align*}
for all $M\in\Z$ (we consider $f_M=0$ for $M<0$).
Putting $g_M:=q^{-3\binom{M}{2}}f_M$, we have
\begin{align*} 
(1-q^{3M})g_{M}
= (q^{3M+a} + q^{3M-a} + q^3) g_{M-1}
  + (q^{3} + q^{3M}) g_{M-2} - q^{6} g_{M-3}.
\end{align*}
Putting $G(x):=\sum_{M\ge0}g_M x^M$,  we have
\begin{align*}
(1-xq^3)(1-x^2q^3)G(x) = (1+xq^{3-a})(1+xq^{3+a})G(xq^3),
\end{align*}
and hence
\begin{align}\label{eq:sol:G}
G(x) 
= \frac{(-xq^{3-a},-xq^{3+a};q^3)_\infty}{(xq^3;q^3)_\infty (x^2q^3;q^6)_\infty}
= \frac{(-xq^{3-a},-xq^{2+a};q^2)_\infty}{(x^2q^3;q^3)_\infty},
\end{align}
where the latter equality is because $a=1,2$. 
By (\eqEuler) and (\eqEulerr) (in \S\ref{prep}) we have
\begin{align*}
G(x) 
= \sum_{i,j,k\ge0} 
\frac{q^{2\binom{i}{2}+(3-a)i}}{\qp[i]{2}}
\frac{q^{2\binom{j}{2}+(2+a)j}}{\qp[j]{2}}
\frac{q^{3k}}{\qp[k]{3}}
x^{i+j+2k}.
\end{align*}
Finally, since $f_M:=q^{3\binom{M}{2}}g_M$ we have 
\begin{align*}
F(x) 
= \sum_{i,j,k\ge0} 
\frac{q^{2\binom{i}{2}+(3-a)i}}{\qp[i]{2}}
\frac{q^{2\binom{j}{2}+(2+a)j}}{\qp[j]{2}}
\frac{q^{3k}}{\qp[k]{3}}
q^{3\binom{i+j+2k}{2}}
x^{i+j+2k},
\end{align*}
which is precisely Theorem \ref{KUresss}.
\end{proof}

\begin{Rem}
In place of \eqref{eq:sol:G}, if we write 
\begin{align*}
G(x) = \frac{(-xq^{2};q)_\infty}{(x^2q^3;q^3)_\infty} \ \text{for $a=1$ and} \
G(x) = (1+xq)\frac{(-xq^{3};q)_\infty}{(x^2q^3;q^3)_\infty} \ \text{ for $a=2$},
\end{align*}
then we get the double sum expression \eqref{eq:Cap_2sum1} 
and an alternative one to \eqref{eq:Cap_2sum2}
\begin{align*}
f_{2}(x,q) = \sum_{i,j\ge0} \frac{q^{\quadd{4}{12}+\intee{6}+\linee{3}{6}}}{\qp[i]{} \qp[j]{3}} x^{i+2j} (1+xq^{1+3i+6j}).
\end{align*}
\end{Rem}

\begin{Rem}
We can reprove Theorem \ref{CAPTHM}
using the equation \eqref{eq:qd:F}.
If we 
put $G(x)=\sum_{M\ge0}g_Mx^M:=F(x)/(x;q^3)_\infty$,
$h_M:=g_M/(-q^3;q^3)_M$ and $H(x):=\sum_{M\ge0} h_M x^M$,
by a similar argument to the proof of Proposition \ref{KUresss}, we get
\begin{align*}
(1-x)(1-xq^3)H(x) = (1+xq^{3-a})(1+xq^{3+a})H(xq^6)
\end{align*}
and $H(x)=(-xq^{3-a},-xq^{3+a};q^6)_\infty/(x;q^3)_\infty$.
Again, by similar arguments (using (\eqEuler) and (\eqEulerr)), we see
\begin{align*}
g_M
= \sum_{i+j+k=M}
\frac{1}{(q^3;q^3)_i}
\frac{q^{6\binom{j}{2}+(3-a)j}}{(q^6;q^6)_j}
\frac{q^{6\binom{k}{2}+(3+a)k}}{(q^6;q^6)_k}
(-q^3;q^3)_{M}.
\end{align*}
Since $F(x)=G(x)(x;q^3)_\infty$,
by Appell's Comparison Theorem \cite[page 101]{Die} we get 
\begin{align*}
F(1) 
&= (q^3;q^3)_\infty \lim_{M\rightarrow\infty}g_M
= (-q^3;q^3)_\infty
\sum_{j,k\ge 0}\frac{q^{6\binom{j}{2}+(3-a)j}}{(q^6;q^6)_j}\frac{q^{6\binom{k}{2}+(3+a)k}}{(q^6;q^6)_k} \\
&= (-q^3;q^3)_\infty (-q^{3-a},-q^{3+a};q^6)_\infty,
\end{align*}
which proves Theorem \ref{CAPTHM}.
\end{Rem}


\end{document}